\newtheorem{definition}{Definition}[section]
\newtheorem{theorem}[definition]{Theorem}
\newtheorem{lemma}[definition]{Lemma}
\newtheorem{proposition}[definition]{Proposition}
\newtheorem{corollary}{Corollary}
\newtheorem{remark}{Remark}
\def \QQ{ {\mathcal{S}}} 
\newcommand{\qq}{{\mu}} 
\def \frH {{H}}
\def \fHH {\mathfrak{H}}
\def \CR {\mathcal{R}}
\def\CF { \mathcal{F}}
\def\CL { \mathcal{L}}
\def\CB {\mathcal{B}}
\def\CL { \mathcal{L}}
\def\CS {\mathcal{S}}
\def\CG {\mathcal{G}}
\def \CS {\mathcal{S}}
\def\CE {E}
\def\a {\alpha}
\def\b {\beta}
\def\g {\gamma}
\def\eps {\epsilon}
\def\d {\delta}
\def\z {\zeta}
\def\lm {\lambda}
\def\s {\sigma}
\def\J {\mathbb{I}}
\def\E {\mathbb{E}}
\def\RE {\mathbb{R}}
\begin{document}

\title{Large Deviations for the solution of a Kac-type kinetic equation}

\author{Federico Bassetti, Lucia Ladelli}


\begin{abstract}
The aim of this paper is to study large deviations for the self-similar solution of a Kac-type kinetic equation. Under the assumption 
that the initial condition belongs to the domain of normal attraction of a stable law of index $\alpha <2$ and under suitable assumptions 
on the collisional kernel, precise asymptotic behavior of the large deviations probability is given. 
\end{abstract}

\maketitle

\section{Introduction}

This paper deals with the probability of large deviations for the  solutions 
of a class of one dimensional Boltzmann-like equations.
Specifically, given an initial probability distribution $\bar \rho_0$ on $\CB(\RE)$, the Borel $\sigma$-field of $\RE$, we consider   a time-dependent
probability measure $\rho_t $ 
solution of the homogeneous kinetic equation 
\begin{equation}
  \label{eq.1}
  \left\{ \begin{aligned}
      & \partial_t \rho_t + \rho_t =
      {Q}^+(\rho_t,\rho_t )
       \\
      & \rho_0=\bar \rho_0.\\
    \end{aligned}
  \right.
\end{equation}
Following
 \cite{BassLadMatth10,CeGaBo}, we assume
that $Q^+$ is the  \textit{smoothing transformation}
defined
by
 \begin{equation}\label{eq.2}
   Q^+(\rho ,\rho)= \text{Law}(LX_1+RX_2) 
 \end{equation}
where  $\rho$ is the law of $X_1,X_2$,
$(L,R)$  is a given random vector of $\RE^2$,  
and $(L,R),X_1$ and $X_2$ are stochastically independent.

The first model of  type \eqref{eq.1}-\eqref{eq.2} has
been introduced by Kac \cite{Kac}, with  collisional parameters
$L=\sin \tilde \theta$ and $R=\cos  \tilde \theta$
{for a random angle $ \tilde \theta$ uniformly distributed  on
$[0,2\pi)$}. In the original Kac equation  
$\rho_t$ represents the probability distribution of the velocity of a particle 
in a homogeneous gas. 
In addition to  the Kac equation, 
also some  one dimensional dissipative Maxwell models for colliding molecules, see e.g.  
\cite{Ben-Avraham,PareschiToscani,PulvirentiToscani}, can be seen as special cases of  \eqref{eq.1}-\eqref{eq.2}. 
Moreover, equations \eqref{eq.1}-\eqref{eq.2} 
have been used  to describe socio-economical dynamics see, e.g.,  \cite{BaLaTo,BCC,DuffieGiroux,MatthesToscani,Pat,Yakovenko} and the references therein. In this last case
particles are replaced by agents in a market and velocities by some quantities of interest (money, wealth, information,...).
Finally, it is worth recalling that, using  results in \cite{CeGaBoBis,CeGaBo},  it can be shown 
that  the  isotropic solution of the multidimensional  inelastic Boltzmann equation \cite{BoCe} can be expressed in terms of the solution
of equation  \eqref{eq.1} for a suitable choice of $(L,R)$. 

The {\it generalized Kac-equation}  \eqref{eq.1}-\eqref{eq.2}
has been extensively studied in many aspects. In particular, the asymptotic behavior of the solutions
of  \eqref{eq.1}-\eqref{eq.2}  has been  treated in details in \cite{BaLa,BassLadMatth10,CeGaBo}.

As for the speed of convergence to equilibrium, explicit rates with respect to suitable probability metrics have been derived 
in various papers. For the Kac equation see \cite{dolera,dolera2,GabettaRegazziniWM},
for the  inelastic Kac equation see \cite{BaLaRe}, for the solutions of the general model  \eqref{eq.1}-\eqref{eq.2} 
see  \cite{BaLa,BassLadMatth10,BaPe}.

Many of  the above mentioned results are based on a probabilistic representation of the solution $\rho_t$. 
In point of fact, as we will briefly explain in Section \ref{S:probrep}, it can be proved that 
the unique solution  $\rho_t$ of \eqref{eq.1}-\eqref{eq.2}
is the law of the stochastic process
\begin{equation}\label{Vt}
V_t=\sum_{j=1}^{\nu_t} \beta_{j\nu_t} X_j 
\end{equation}
where $\nu_t$ is a Yule process, $[\beta_{jn}]_{jn}$ are suitable random weights and $X_j$ are independent identically distributed (i.i.d., for short) random variables with law $\bar \rho_0$. 

The aim of this paper is to study large deviations for
the (eventually rescaled) solution $\rho_t$ when the initial condition $\bar{\rho}_0$ belongs to the domain of normal attraction of an $\alpha$-stable law.
More precisely, we will study the large deviation probability for $e^{-t \mu(\a)}V_t$ 
when, for a suitable $\mu(\a)$, $e^{-t \mu(\a)}V_t$ converges in law to 
a scale mixture of $\a$-stable distributions.      
In the following we shall assume that $\a<2$, the study of the  case $\a=2$ is postponed 
to future work since it  requires completely different techniques. 

In view of the probabilistic representation \eqref{Vt} it is not surprising that the study of the large deviation probabilities for $\rho_t$ is strictly related to large deviations for sums of i.i.d. random variables. 

Let us briefly recall these classical results. 
If $\a \in (0,1) \cup (1,2)$  and if $(X_n)_{n \geq 1}$ is a sequence of i.i.d. random variables
in the domain of normal  attraction of an $\a$-stable law, 
centered if $\a>1$, then, 
$n^{-1/\a} \sum_{i=1}^n X_i$ converges in law to an $\a$-stable random variable. Moreover, if $x_n \to +\infty$,
then 
\begin{equation}\label{largeiid3}
P\Big \{\Big|{n^{-\frac{1}{\a}}} \sum_{i=1}^n X_i\Big|>x_n\Big\} 
\sim {P\{n^{-\frac{1}{\alpha}}\max_{j=1,\dots,n}|X_j|> x_n\}} 
 \sim \frac{c_0}{x_n^\a},
\end{equation}
where $c_0$ is a positive constant determined by the law of $X_1$. See \cite{Heyde1, Heyde2,Heyde4}. For more information on large deviations for sums of i.i.d. random variables
see, for example, \cite{DemboZeit,Vinogradov} and the references therein.

Our main result, which is stated in Theorem \ref{main.thm}, is reminiscent of \eqref{largeiid3}. It can
be summarized by saying that {\it if the initial distribution $\bar \rho_0$ belongs 
to the domain of normal attraction of an $\a$-stable law with $\a<2$ and  the collision coefficients $(L,R)$ satisfy some additional assumptions,
then
\[
 P\{| e^{-t \mu(\a)} V_t|> x_t\} \sim P\{e^{-t \mu(\a)} \max_{j=1,\dots,\nu_t}|\b_{j\nu_t}X_j| > x_t\} \sim\frac{c_0}{x_t^\a}
\]
as $x_t$ goes to $+\infty$}.
As in the i.i.d. case, this result can be interpreted 
by saying that the main part of probability of large deviations is generated by one
large summand comparable with the whole sum process $V_t$. 

The paper is organized as follows. 
Section \ref{sct.weak} is devoted to a brief review of some known results on 
the self-similar asymptotics for the solutions of \eqref{eq.1}. Section \ref{S:probrep} contains the detailed description of the probabilistic representation \eqref{Vt}. In Section \ref{sct.max} we provide some results on the process 
 $H_t=\max_{j=1,\dots,\nu_t}|\b_{j\nu_t}X_j|$. In particular we show that the law of $H_t$ satisfies a kinetic equation of type \eqref{eq.1} for a suitable collisional kernel.
Section \ref{S:mainresults} contains the large deviation results for $\rho_t$. 
Section \ref{S:weightedsums} deals with the study of large deviation probabilities for 
weighted sums of i.i.d. random variables. The proofs of the results stated in Section \ref{S:preliminaries} and \ref{S:mainresults} are collected in Section \ref{sec:proof}.

\section{Self-similar asymptotics for the solutions}\label{S:preliminaries}

In the following, all the random elements are defined on a given probability space $(\Omega,\CF,P)$ and $\E$  denotes the expectation 
with respect to $P$.

Throughout the paper we assume that 
\[ 
\text{\it $L$ and $R$ are non-negative random variables such that
\( P\{L>0\}+P\{R>0\}>1 \).
}
\]

As for the initial probability distribution $\bar \rho_0$ is concerned, we will assume that it belongs
to the {\it domain of normal attraction} of an $\alpha$-stable law. 
It is well-known  that, provided $\a\not=2$,
a probability measure $\bar \rho_0$ belongs to the { domain of normal attraction} of an $\a$-stable law
if and only if its distribution function $F_0(x):=\bar \rho_0\{(-\infty,x]\}$ satisfies
\begin{equation}
  \label{stabledomain}
    \lim_{x \to +\infty} x^\a (1-F_0(x)) =c_0^+<+\infty, \quad
    \lim_{x \to -\infty} |x|^\a F_0(x) =c_0^-<+\infty. 
\end{equation}
Typically, one also requires that $c_0^++c_0^->0$.
See for example  Chapter 2 of \cite{Ibragimov}.

Finally, let us introduce the convex function $\QQ:[0,+\infty)\to[-1,+\infty]$ by
\begin{equation*}
  \QQ(s):=\E[L^s+R^s]-1,
\end{equation*}
with the convention that $0^0=0$ and let 
\[
\qq(s):=\frac{\QQ(s)}{s} \qquad (s>0)
\]
be the so called spectral function of $Q^+$, see \cite{BaLa} and \cite{CeGaBo}.

\subsection{Convergence to self-similar solutions}
\label{sct.weak}

In the study of the asymptotic behavior of the solutions of \eqref{eq.1}, a fundamental role is played by the fixed point equation for distributions 
\begin{equation}\label{eq.fix.mix}
Z \stackrel{\CL}{=} \Theta^{\CS(\a)}(L^\a Z_1 + R^\a Z_2)
\end{equation}
where $Z,Z_1,Z_2$ are i.i.d. positive random variables, $\Theta$ is a random variable with uniform distribution on $(0,1)$, 
$(Z,Z_1,Z_2)$, $\Theta$ and $(L,R)$ are stochastically independent.

As already recalled in the introduction, the unique solution $\rho_t$ to \eqref{eq.1}-\eqref{eq.2}
is the law of the stochastic process $V_t$ defined in \eqref{Vt}. 
Further details on this probabilistic representation  will be given in Section \ref{S:probrep}. 
The next results, concerning the  convergence of a suitable rescaling of $V_t$ to the  
so-called self-similar solutions of \eqref{eq.1}, 
are proved in \cite{BaLa}.

\begin{theorem}[CLT when $\a\not =1$, \cite{BaLa}]\label{thm2} 
Let $\a \in (0,1) \cup (1,2)$ and let condition \eqref{stabledomain} be satisfied for some $(c_0^+,c_0^-)$ such that $c_0^++c_0^->0$, 
with $\int v \bar \rho_0(dv)=0$ if $\a>1$.
If $\qq(\delta)<\qq(\a)<+\infty$ for some $\delta>\alpha$, then $e^{-\qq(\a)t} V_t$ converges in distribution, as $t\to +\infty$, to a random variable $V_\infty$ 
  with the following characteristic function:
  \begin{equation}
    \label{characteristic}
     \E[e^{i \xi  V_\infty} ] 
    =\E[ \exp\{ -|\xi|^\a \lm Z_{\infty}{(\a)} (1-i \eta \tan(\pi\a/2)
    \operatorname{sign}\xi)   \} ]  \qquad (\xi \in \RE)
  \end{equation}
  where 
\begin{equation}
  \label{constant}
  \lm =  \frac{(c_0^{+}+c_0^{-})\pi}{2\Gamma(\a)\sin(\pi\a/2)},
  \qquad \eta = \frac{c_0^{+}-c_0^{-}  }{c_0^{+}+c_0^{-}}
\end{equation} 
and the law of $Z_\infty(\a)$ is the unique positive solution to \eqref{eq.fix.mix} with $\E[Z_\infty{(\a)}]=1$. 
\end{theorem}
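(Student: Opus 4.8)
The plan is to pass to characteristic functions and exploit the probabilistic representation \eqref{Vt}. Writing $\hat\rho_0(u):=\int_\RE e^{iux}\,\bar\rho_0(dx)$ and conditioning on the Yule process together with the weight array $[\beta_{j\nu_t}]$, independence of the $X_j$ yields
\[
\E\big[e^{i\xi e^{-\qq(\a)t}V_t}\big]=\E\Big[\prod_{j=1}^{\nu_t}\hat\rho_0\big(\xi e^{-\qq(\a)t}\beta_{j\nu_t}\big)\Big].
\]
The first ingredient I would invoke is the classical consequence of \eqref{stabledomain}: as $u\to0$,
\[
\hat\rho_0(u)=1-\lm|u|^\a\big(1-i\eta\tan(\pi\a/2)\operatorname{sign}u\big)+o(|u|^\a),
\]
with $\lm,\eta$ exactly as in \eqref{constant}; the centering hypothesis $\int v\,\bar\rho_0(dv)=0$ is precisely what cancels the first-order term when $\a>1$. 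Since $L,R\ge0$ forces $\beta_{j\nu_t}\ge0$, one has $\operatorname{sign}(\xi\beta_{j\nu_t})=\operatorname{sign}\xi$, so every factor carries the same phase.

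Next I would introduce the auxiliary variable
\[
Z_t:=e^{-\CS(\a)t}\sum_{j=1}^{\nu_t}\beta_{j\nu_t}^\a ,
\]
and use the identity $\a\,\qq(\a)=\CS(\a)$, which makes $e^{-\a\qq(\a)t}\sum_j\beta_{j\nu_t}^\a=Z_t$. Summing the expansion over $j$, the conditional log-characteristic function should equal, to leading order, $-\lm|\xi|^\a(1-i\eta\tan(\pi\a/2)\operatorname{sign}\xi)\,Z_t$ plus a remainder. To control the remainder I would prove that the maximal rescaled weight $\max_{j\le\nu_t}e^{-\qq(\a)t}\beta_{j\nu_t}$ tends to $0$: this lets the $o(|u|^\a)$ term be applied uniformly, and, writing $a_j$ for the $j$-th factor minus $1$, it makes the gap between $\prod_j(1+a_j)$ and $\exp(\sum_j a_j)$ vanish, since that gap is controlled by $\big(\max_j|a_j|\big)\sum_j|a_j|$ with $\sum_j|a_j|$ of order $Z_t$. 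Both the finiteness of $Z_t$ and the decay of the maximal weight are where the hypothesis $\qq(\delta)<\qq(\a)$ for some $\delta>\a$ enters: it places $\a$ strictly inside the convergence regime, so the rightmost weight grows strictly slower than $e^{\qq(\a)t}$.

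The crux is the convergence of $Z_t$. Decomposing at the first branching time $\tau$ of the Yule process, an $\mathrm{Exp}(1)$ random time, the two subtrees evolve as independent copies of the whole process run for the remaining time $t-\tau$, giving the exact self-similar identity
\[
Z_t\stackrel{\CL}{=}e^{-\CS(\a)\tau}\big(L^\a Z^{(1)}_{t-\tau}+R^\a Z^{(2)}_{t-\tau}\big),
\]
with $Z^{(1)},Z^{(2)}$ i.i.d. copies of $Z$. Because $\tau\sim\mathrm{Exp}(1)$ gives $e^{-\tau}\sim\mathrm{Unif}(0,1)$, setting $\Theta:=e^{-\tau}$ turns the limiting relation into exactly the fixed point \eqref{eq.fix.mix}. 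I would then observe that $Z_t$ is a nonnegative martingale (a generator computation gives $\E[Z_t]=1$) of Biggins--Kahane--Peyri\`ere type for the underlying multiplicative branching structure, and that the strict inequality $\qq(\delta)<\qq(\a)$, with $p:=\delta/\a>1$, is exactly the condition guaranteeing $L^p(P)$-boundedness, hence uniform integrability and $L^1$-convergence to a nondegenerate limit $Z_\infty(\a)$ with $\E[Z_\infty(\a)]=1$; uniqueness of the mean-one positive fixed point identifies this limit with the $Z_\infty(\a)$ of \eqref{eq.fix.mix}.

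Finally, combining the two pieces, the conditional characteristic function converges pointwise to $\exp\{-\lm|\xi|^\a Z_\infty(\a)(1-i\eta\tan(\pi\a/2)\operatorname{sign}\xi)\}$; since all integrands are bounded by $1$ in modulus, dominated convergence passes the outer expectation to the limit, and L\'evy's continuity theorem then gives convergence in distribution with the characteristic function \eqref{characteristic}. I expect the martingale step to be the main obstacle: proving non-degeneracy and unit mean of $Z_\infty(\a)$, and simultaneously forcing the maximal weight to vanish so the linearization of $\hat\rho_0$ is uniform across all $\nu_t$ summands — throughout, the quantitative leverage comes from $\qq(\delta)<\qq(\a)$.
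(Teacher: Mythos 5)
Your proposal is correct in outline, but it takes a genuinely different route from the one the paper relies on. The paper does not reprove Theorem \ref{thm2}: it imports it from \cite{BaLa}, and the method used there is visible in this paper's own sketch of Theorem \ref{thm1} and in Propositions \ref{Lemma2BL}--\ref{Lemma7new}. That route conditions on the weight array and applies the \emph{general central limit theorem for null triangular arrays} (verifying the L\'evy--Khintchine conditions on truncated moments for $\sum_j a_{jn}X_j$ when $\sum_j a_{jn}^\alpha\to\text{const}$ and $\max_j a_{jn}\to 0$), and it obtains the mixing variable through the \emph{discrete-time} martingale $\tilde M_n(\alpha)=M_n(\alpha)/m_n(\alpha)$, composed with the Yule limit $e^{-t}\nu_t\to\CE$, which yields the explicit factorization $Z_\infty(\alpha)=\CE^{\mathcal{S}(\alpha)}\tilde M_\infty(\alpha)/\Gamma(\mathcal{S}(\alpha)+1)$ with $\CE$ independent of $\tilde M_\infty(\alpha)$; the fixed-point property \eqref{eq.fix.mix} is then quoted from \cite{BaLa}. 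You instead expand $\hat\rho_0$ directly ($\hat\rho_0(u)=1-\lm|u|^\alpha(1-i\eta\tan(\pi\alpha/2)\operatorname{sign}u)+o(|u|^\alpha)$, plus the global bound $|1-\hat\rho_0(u)|\le C|u|^\alpha$) and compare $\prod_j(1+a_j)$ with $\exp(\sum_j a_j)$ — legitimate since $|\hat\rho_0|\le 1$ and $\operatorname{Re}a_j\le 0$ — and you treat $Z_t=e^{-\mathcal{S}(\alpha)t}M_{\nu_t}(\alpha)$ as the \emph{continuous-time} additive martingale, getting $L^p$-boundedness from the Biggins criterion $p\,\mathcal{S}(\alpha)>\mathcal{S}(p\alpha)$ with $p=\delta/\alpha$, which is indeed equivalent to $\qq(\delta)<\qq(\alpha)$, and deriving \eqref{eq.fix.mix} structurally from the first branching time ($\Theta=e^{-\tau}\sim\mathrm{Unif}(0,1)$). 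Both approaches work; yours is more self-contained at the characteristic-function level and explains \emph{why} $\Theta^{\mathcal{S}(\alpha)}$ appears in \eqref{eq.fix.mix}, while the paper's decomposition buys the explicit independent-factor representation of $Z_\infty(\alpha)$ that is exploited later (Proposition \ref{Lemma7new} and the large-deviation proofs). Three small repairs you should record: (i) for the $L^p$ step with standard convexity tools you want $p\le 2$, which you get for free since, as the paper notes, convexity of $\mathcal{S}$ lets one assume $\alpha<\delta<2\alpha$ without loss of generality; (ii) the branching self-similarity of the weight recursion \eqref{recursion} (uniform index $I_n$ giving i.i.d.\ subtrees after the first split) deserves explicit justification; (iii) uniqueness of the mean-one positive solution of \eqref{eq.fix.mix} is asserted, not proved — like the paper, you are importing it from the smoothing-transform literature, which is acceptable but should be cited.
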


Further information on the mixing random variable $Z_\infty(\a)$  are given in Proposition \ref{Lemma7new}. See also \cite{BaLa}.

The results concerning the case $\alpha=1$ are here stated under slightly more general assumptions than 
in \cite{BaLa}. For completeness a sketch of the proof is given 
in Section \ref{sec:proof}. 

\begin{theorem}[CLT when $\alpha=1$]\label{thm1}
Let \eqref{stabledomain}  holds with $\a=1$ and $c_0^+=c_0^->0$.
Suppose, in addition, that
\begin{equation}\label{gamma}
\lim_{R\to+\infty}\int_{(-R,R)}x dF_0(x)=\gamma_0
\end{equation}
with $-\infty<\gamma_0<+\infty$.
If $\qq(\delta)<\qq(1)<+\infty$ for some $\delta>1$, then $e^{-\qq(1)t} V_t$ converges in distribution, as $t\to +\infty$, to a random variable $V_\infty$   with the following characteristic function:
\begin{equation}\label{characteristic2}
\E[\exp(i \xi  V_\infty) ] =\E[ e^{ Z_{\infty}{(1)} (i  \gamma_0 \xi- c_0^+\pi|\xi|)} ]
\end{equation}
and the law of $Z_\infty(1)$ is the unique positive solution to \eqref{eq.fix.mix} for $\a=1$, with $\E[Z_\infty{(1)}]=1$.
\end{theorem}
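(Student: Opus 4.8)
The plan is to prove Theorem~\ref{thm1} by transferring the classical CLT for sums of i.i.d. random variables in the domain of normal attraction of a Cauchy law to the weighted sum $V_t = \sum_{j=1}^{\nu_t}\beta_{j\nu_t}X_j$ via the probabilistic representation \eqref{Vt}, conditioning on the random weights. The central object is the characteristic function $\E[e^{i\xi V_t}]$, which we compute by first conditioning on the Yule process $\nu_t$ and the weight array $[\beta_{jn}]$. Conditionally on the weights, $V_t$ is a weighted sum of i.i.d. copies of $X_1$, so its conditional characteristic function factorizes as $\prod_{j=1}^{\nu_t}\widehat{F_0}(\beta_{j\nu_t}\xi)$, where $\widehat{F_0}$ is the characteristic function of $\bar\rho_0$. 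The key is then to establish the correct first-order expansion of $\widehat{F_0}(\theta)$ as $\theta\to 0$ in the Cauchy case $\alpha=1$.

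First I would record the asymptotic expansion of $\log\widehat{F_0}(\theta)$ under \eqref{stabledomain} with $\alpha=1$, $c_0^+=c_0^->0$, and the centering condition \eqref{gamma}. Under these hypotheses the standard theory (Chapter~2 of \cite{Ibragimov}) gives an expansion of the form $\log\widehat{F_0}(\theta) = i\gamma_0\theta - c_0^+\pi|\theta| + o(|\theta|)$ as $\theta\to 0$; the symmetry $c_0^+=c_0^-$ is precisely what removes the troublesome $\theta\log|\theta|$ term that normally appears in the index-one case, while \eqref{gamma} supplies the genuine drift $\gamma_0$. After rescaling by $e^{-\qq(1)t}$ and writing $\theta = e^{-\qq(1)t}\beta_{j\nu_t}\xi$, I would sum the expansions over $j$ to obtain
\begin{equation*}
\log\E[e^{i\xi e^{-\qq(1)t}V_t}\mid \text{weights}] = i\gamma_0\xi e^{-\qq(1)t}\sum_{j=1}^{\nu_t}\beta_{j\nu_t} - c_0^+\pi|\xi|e^{-\qq(1)t}\sum_{j=1}^{\nu_t}|\beta_{j\nu_t}| + (\text{error}).
\end{equation*}
Here both the drift and the dispersion couple to the \emph{same} linear functional $\sum_j\beta_{j\nu_t}$ of the weights (using $\beta_{j\nu_t}\ge 0$, so $|\beta_{j\nu_t}|=\beta_{j\nu_t}$), which is exactly why the single mixing variable $Z_\infty(1)$ governs both terms of \eqref{characteristic2}.

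The main analytic work, and the step I expect to be the principal obstacle, is controlling the error term uniformly and identifying the limit of the rescaled weight sum. For the limit, one shows that $e^{-\qq(1)t}\sum_{j=1}^{\nu_t}\beta_{j\nu_t}$ converges in distribution to $Z_\infty(1)$, using the assumption $\qq(\delta)<\qq(1)<+\infty$ for some $\delta>1$: this spectral gap condition is what makes the martingale-type normalization $e^{-\qq(1)t}$ the correct one and forces convergence to the fixed point of \eqref{eq.fix.mix} (this is essentially the content of the results of \cite{BaLa} that we are permitted to invoke). The delicate point is that in the $\alpha=1$ case the $o(|\theta|)$ remainder in the characteristic function is not summable for free; I would control $\sum_j \text{error}(\beta_{j\nu_t}\xi e^{-\qq(1)t})$ by splitting the sum according to whether $\beta_{j\nu_t}e^{-\qq(1)t}$ is large or small, using $\qq(\delta)<\qq(1)$ to dominate the contribution of the large weights (which is where a bound on $\E\sum_j\beta_{j\nu_t}^\delta$ enters and stays finite), and bounding the small-weight contribution by the uniform smallness of the remainder near the origin.

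Finally, once the conditional characteristic function is shown to converge to $\exp\{Z_\infty(1)(i\gamma_0\xi - c_0^+\pi|\xi|)\}$ pointwise in $\xi$ with a dominating bound, I would pass to the limit under the expectation by dominated convergence, obtaining \eqref{characteristic2}, and conclude convergence in distribution by L\'evy's continuity theorem. The sketch, as promised in the text, would emphasize only the departures from the $\alpha\ne 1$ argument of Theorem~\ref{thm2}: namely the appearance of the drift $\gamma_0$ through \eqref{gamma} and the cancellation of the logarithmic term through the symmetry $c_0^+=c_0^-$, referring to \cite{BaLa} for the parts of the argument that carry over verbatim.
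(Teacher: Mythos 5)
Your proposal is correct and shares the paper's overall architecture---condition on the weight array, establish a CLT for weighted sums with deterministic weights $a_{jn}$ satisfying $\sum_{j=1}^n a_{jn}\to a_\infty$ and $\max_{1\le j\le n}a_{jn}\to 0$, then randomize using $e^{-\mathcal{S}(1)t}M_{\nu_t}(1)\to Z_\infty(1)$ (Proposition \ref{Lemma7new}) and dominated convergence---but it runs the key conditional lemma on a different engine. The paper never expands $\log\widehat{F_0}$: it invokes the classical limit theorem for infinitesimal triangular arrays (Theorem 30 and Proposition 11 of Fristedt--Gray) and verifies the three L\'evy--Khintchine-type conditions \eqref{condition1-uno-bis}--\eqref{condition3-bis}; the roles you assign to the symmetry $c_0^+=c_0^-$ and to \eqref{gamma} appear there as the decomposition of the centering term $\eta_n$ into a truncated-mean part (tending to $a_\infty\gamma_0$) and a tail-compensation part (tending to $a_\infty(c_0^+-c_0^-)=0$), so both proofs hinge on exactly the same cancellations, packaged differently. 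What the paper's route buys is that all remainder control is outsourced to the general theorem, so only three limits need checking; what yours buys is self-containedness, at the price of two obligations you should make explicit: (i) the expansion $\log\widehat{F_0}(\theta)=i\gamma_0\theta-\pi c_0^+|\theta|+o(|\theta|)$, and in particular the identification of the linear coefficient with the principal-value mean $\gamma_0$ under only \eqref{stabledomain} and \eqref{gamma}, is precisely the content of condition \eqref{condition3-bis} and deserves a computation rather than a bare citation (the cancellation of the $O(\theta)$ pieces from the two tails uses $c_0^+=c_0^-$ in the same way the paper's second sum in $\eta_n$ does); and (ii) taking logarithms of the product requires $\widehat{F_0}\bigl(\beta_{j\nu_t}\xi e^{-\mu(1)t}\bigr)$ bounded away from $0$, which holds on the event $\{\beta_{(\nu_t)}e^{-\mu(1)t}|\xi|\le\eta\}$, whose probability tends to $1$ by \eqref{betatozero}. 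One simplification: your large/small-weight splitting with a $\delta$-moment bound is unnecessary---on that same event the total remainder is dominated by $\epsilon\,|\xi|\,e^{-\mathcal{S}(1)t}M_{\nu_t}(1)$, which is bounded in $L^1$ by \eqref{media1}, so it vanishes in probability without any control of $\sum_j\beta_{j\nu_t}^\delta$; the spectral-gap hypothesis $\mu(\delta)<\mu(1)$ is needed only where the paper uses it, namely to guarantee \eqref{convL1} and \eqref{betatozero} via Proposition \ref{Lemma7new}.
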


\begin{remark} In order to study the large deviations for $\rho_t$, in what follows we will need to assume that $c_0^++c_0^->0$, even if 
both {\rm Theorem \ref{thm2}} and {\rm Theorem \ref{thm1}} hold also for $c_0^++c_0^-=0$.
In this last case, {\rm Theorem \ref{thm2}} is valid with $\lambda=\eta=0$ and hence $V_\infty=0$ with probability one,
while  {\rm Theorem \ref{thm1}} is valid
with $V_\infty=\gamma_0 Z_\infty(1)$. 
\end{remark}

\begin{remark}
Let us consider a random vector $(L,R)$  such that $\qq(\a)=0$, that is $\E[L^\a+R^\a]=1$.
As a consequence of the previous results, if $\E[L^\delta+R^\delta]<1$ for some $\delta>\a$, then 
$V_t$ converges in  distribution  to $V_\infty$. In this case $Z_\infty(\alpha)$ satisfies 
the fixed point equation 
\begin{equation*}%
Z \stackrel{\CL}{=} L^\a Z_1 + R^\a Z_2
\end{equation*}
and it is easy to see that the law $\rho_\infty$  of $V_\infty$ is a steady state for equation \eqref{eq.1}, i.e.
$\rho_\infty=Q^+(\rho_\infty,\rho_\infty)$.  This case has been extensively studied in \cite{BassLadMatth10}.
\end{remark}

\subsection{Probabilistic representation of the solution}\label{S:probrep}
The proofs of Theorems \ref{thm2} and \ref{thm1} 
are based on the fact that $V_t$ is a randomly weighted sum of i.i.d. random variables. In \cite{BassLadMatth10} it has been shown that
the unique solution of  \eqref{eq.1}-\eqref{eq.2}  with initial datum $\bar{\rho}_0$ is the law of
\begin{equation*}
V_t=\sum_{j=1}^{\nu_t}\beta_{j,\nu_t}X_j,
\end{equation*}
provided that
\begin{itemize}
\item $(X_j)_{j\geq1}$ is a sequence of i.i.d. random variables with distribution $\bar\rho_0$;
\item $(\nu_t)_{t\geq0}$ is a Yule process, see e.g. \cite{AthNey}, hence in particular 
        \[
        P\{\nu_t=n\}=e^{-t}(1-e^{-t})^{n-1}
        \]
        for every $n\geq1$ and $t\geq0$;
\item  $(\beta_{j,n}:\;j=1,\dots,n)_{n\geq1}$  is an array of non-negative random weights;
\item  $(X_j)_{j\geq1}$, $(\nu_t)_{t\geq0}$ and $(\beta_{j,n}:\;j=1,\dots,n)_{n\geq1}$ 
are stochastically independent. 
\end{itemize}
As to the definition of the weights $\b_{jn}$'s is concerned: 
$\beta_{1,1}:=1$, $(\beta_{1,2},\beta_{2,2}):=(L_1,R_1)$ and, for any $n\geq 2$,
\begin{equation}
  \label{recursion}
  \begin{split}
    (\beta_{1,n+1}, & \ldots,\beta_{n+1,n+1})  
    \\& := (\beta_{1,n},\ldots,\beta_{I_n-1,n}, L_n \beta_{I_n,n}, R_n \beta_{I_n,n},\beta_{I_n+1,n},\ldots, \beta_{n,n}), \\
  \end{split}
\end{equation}
where $(L_n,R_n)_{n\geq1}$ is a sequence of i.i.d. random vectors distributed as $(L,R)$,
$(I_n)_{n\geq1}$ is a sequence of independent random variables uniformly distributed on $\{1,\dots,n\}$ for every $n\geq1$, 
$(L_n,R_n)_{n\geq1}$ and $(I_n)_{n\geq1}$ are independent.

\subsection{The max-process $\frH_t$}\label{sct.max}
Since we shall compare the large deviations
of  $e^{-\qq(\a)t}{V}_t$ with the large deviations of 
$e^{-\qq(\a)t} \frH_t$, where 
\[
\frH_t=\max_{1 \leq j \leq \nu_t} | \beta_{j,\nu_t} X_j|,
\]
we start by providing some results on this last process. First of all, it is worth noticing that the law of $\frH_t$ satisfies an homogeneous kinetic equation of the form 
\eqref{eq.1} with $Q^+$ replaced by
the kernel
 \begin{equation}\label{eq.2maxversion}
   \tilde Q^+(\rho ,\rho)= \text{Law}(\max\{L|X_1|,R|X_2|\})
 \end{equation}
where, as usual, $X_1,X_2,(L,R)$ are independent and $X_i$ has law $\rho$ for $i=1,2$. 
\begin{theorem}\label{thm.eqdiff}
 Let ${\fHH}_t(x):=P\{ \frH_t \leq x\}$, then
 \begin{equation}
  \label{eq.max}
  \left\{ \begin{aligned}
      & \partial_t {\fHH}_t(x) + {\fHH}_t(x) =
      \E[{\fHH}_t({x}/{L}) {\fHH}_t(x/R)]
       \\
      & {\fHH}_0(x)= P\{ |X_1| \leq x\}.\\
    \end{aligned}
  \right.
\end{equation}
for every $x$ in $\RE$, with the convention ${\fHH}_t({x}/{0})= 0$ if $x<0$ and ${\fHH}_t({x}/{0})= 1$ otherwise. 
\end{theorem}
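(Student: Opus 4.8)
The plan is to exploit the branching structure of the Yule process to obtain a self-similar recursion for $\frH_t$, then to convert it into an integral equation for $\fHH_t$ and differentiate. First I would record the recursive decomposition. Let $T$ denote the first split time of the Yule process, so that $T$ is exponentially distributed with parameter $1$ and $P\{T>t\}=e^{-t}$. Before time $T$ one has $\nu_t=1$, hence $\frH_t=|X_1|$. By the branching property of the Yule process, at the instant $T$ the single individual is replaced by two individuals and the weight recursion \eqref{recursion} multiplies all the weights descending from these two individuals by $L_1$ and $R_1$ respectively; moreover the two resulting subtrees evolve as two \emph{independent} copies of the whole weighted Yule process. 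Consequently, for $t\geq T$,
\[
\frH_t=\max\{L_1\,\frH^{(1)}_{t-T},\;R_1\,\frH^{(2)}_{t-T}\},
\]
where $\frH^{(1)},\frH^{(2)}$ are independent copies of $\frH$, independent of $(L_1,R_1)$ and of $T$. This is the max-analogue of the additive decomposition that yields \eqref{eq.1}--\eqref{eq.2} for $V_t$.

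Next I would pass to distribution functions. Conditioning on the value of $T$ and using $\{\max\{a,b\}\leq x\}=\{a\leq x\}\cap\{b\leq x\}$ together with the independence of the two subtrees, one gets, for fixed $(L,R)=(\ell,r)$,
\[
P\{\max\{\ell\,\frH^{(1)}_{u},\,r\,\frH^{(2)}_{u}\}\leq x\}=\fHH_u(x/\ell)\,\fHH_u(x/r),
\]
where the prescribed conventions for division by zero cover the degenerate weights (if, say, $\ell=0$ then $\max\{0,r\frH^{(2)}_u\}=r\frH^{(2)}_u\geq 0$, matching $\fHH_u(x/0)=\mathbf{1}_{\{x\geq0\}}$). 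Averaging over $(L,R)$ and integrating against the exponential law of $T$ gives the mild form
\[
\fHH_t(x)=e^{-t}\fHH_0(x)+\int_0^t e^{-s}\,\E[\fHH_{t-s}(x/L)\,\fHH_{t-s}(x/R)]\,ds .
\]

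Finally I would differentiate. After the substitution $u=t-s$ the integral becomes $e^{-t}\int_0^t e^{u}\,\E[\fHH_u(x/L)\fHH_u(x/R)]\,du$; multiplying through by $e^t$ and differentiating in $t$ yields
\[
\partial_t\bigl(e^t\fHH_t(x)\bigr)=e^t\,\E[\fHH_t(x/L)\,\fHH_t(x/R)],
\]
which upon expanding the left-hand side is exactly \eqref{eq.max}. The initial condition $\fHH_0(x)=P\{|X_1|\leq x\}$ is immediate since $\nu_0=1$ forces $\frH_0=|X_1|$.

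The main obstacle is the first step: making the branching decomposition of the \emph{weighted} max fully rigorous. One must check, starting from \eqref{recursion}, that the weight array factors along the genealogical tree as the product of the root factors $L_1,R_1$ with two conditionally independent copies of the weight array, and that the maxima of $|\beta_{j,\nu_t}X_j|$ over the two subtrees inherit this independence. Some care is also needed for the degenerate cases $L=0$ or $R=0$, handled by the stated conventions, and for the mild regularity in $t$ — continuity of $t\mapsto\fHH_t(x)$ from the integral representation, hence differentiability of its right-hand side — needed to justify the differentiation.
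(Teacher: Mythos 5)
Your proposal is correct, but it proves the theorem by a genuinely different route than the paper. The paper's own proof is a two-step sketch built on existing machinery: it invokes Kielek's results on convolution iterates to show that the Wild-type series $q_t=\sum_{n\geq1}e^{-t}(1-e^{-t})^{n-1}\tilde q_n$, with $\tilde q_n$ defined recursively from the max-kernel $\tilde Q^+$ of \eqref{eq.2maxversion}, solves the kinetic equation, and then identifies $q_t$ with the law of $\frH_t$ by repeating the argument of Proposition 1 in \cite{BassLadMatth10}. You instead work directly on the process: you decompose at the first split time $T$ of the Yule process, use the branching self-similarity of the weighted tree to get $\frH_t=\max\{L_1\frH^{(1)}_{t-T},R_1\frH^{(2)}_{t-T}\}$ on $\{t\geq T\}$, derive the Duhamel (mild) form $\fHH_t(x)=e^{-t}\fHH_0(x)+\int_0^t e^{-s}\,\E[\fHH_{t-s}(x/L)\fHH_{t-s}(x/R)]\,ds$, and differentiate. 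The two arguments are essentially dual: iterating your mild equation and expanding in powers of $(1-e^{-t})$ regenerates exactly the Wild series the paper takes as its starting point, and the step you flag as the main obstacle --- the factorization of the weight array \eqref{recursion} along the two subtrees into the root marks $(L_1,R_1)$ times independent copies --- is precisely the inductive fact (conditionally on $\nu_t=n$, the law of $\max_j\beta_{jn}|X_j|$ satisfies the Wild recursion) that the paper imports wholesale from \cite{BassLadMatth10}; so you have not hidden a gap, you have simply relocated the same combinatorial work. What each approach buys: the paper's route is almost free given the cited results and simultaneously yields existence/uniqueness of the solution to the equation in the Wild form, while yours is self-contained, avoids the citation to \cite{Kielek}, and proves directly the statement actually asserted (that the law of $\frH_t$ satisfies \eqref{eq.max}), at the cost of having to verify the branching property of the marked Yule tree by hand. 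Your treatment of the degenerate weights via the stated conventions for $\fHH_t(x/0)$ and of the regularity needed to differentiate (continuity of $t\mapsto\fHH_t(x)$ from the mild form, boundedness of the integrand, dominated convergence) is sound.
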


Following the same line of reasoning of \cite{BaLa,BassLadMatth10} we prove the next result on the asymptotic 
behavior of $e^{-\qq(\a)t} \frH_t$.

\begin{theorem}\label{maxlimittheorem}
 Let $\a \in (0,1) \cup (1,2)$ and the hypotheses of {\rm Theorem \ref{thm2}} be in force, 
or let $\a=1$ and the hypotheses of {\rm Theorem \ref{thm1}} hold.  Assume also that  $c_0=c_0^++c_0^->0$.
Then $e^{-\qq(\a)t} \frH_t$ converges in distribution, as $t\to +\infty$, to a random variable $\frH_\infty$ 
with the following probability distribution function:
  \begin{equation}
    \label{maxlimit}
 P\{ \frH_\infty \leq x \} =
\left\{ \begin{array}{ll}
       \E\Big[ e^{-\frac{c_0}{|x|^{\a}} Z_{\infty}{(\a)}} \Big] & \text{if $x > 0$}
       \\
P\{ Z_{\infty}{(\a)}=0\}  & \text{if $x = 0$}\\
      0 & \text{if $x < 0$ }
    \end{array}
  \right.
 \end{equation}
  where  the law of $Z_\infty(\a)$ is the unique positive solution to \eqref{eq.fix.mix} with $\E[Z_\infty{(\a)}]=1$. 
\end{theorem}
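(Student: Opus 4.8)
The plan is to work directly with the probabilistic representation of $\frH_t$ and to reduce the whole statement to the asymptotic behaviour of the random weight-sum $W_t:=\sum_{j=1}^{\nu_t}\b_{j,\nu_t}^{\a}$. Conditioning on $\nu_t$ and on the array of weights, and using that the $X_j$ are i.i.d. and independent of $(\nu_t,(\b_{j,n}))$, I would first record the exact identity
\begin{equation*}
\P\{e^{-\qq(\a)t}\frH_t\le x\}=\E\Big[\prod_{j=1}^{\nu_t}F_{|X|}\big(x\,e^{\qq(\a)t}/\b_{j,\nu_t}\big)\Big]\qquad(x>0),
\end{equation*}
where $F_{|X|}(z)=\P\{|X_1|\le z\}$ and, by \eqref{stabledomain}, its tail satisfies $\bar F_{|X|}(z):=1-F_{|X|}(z)\sim c_0 z^{-\a}$ as $z\to+\infty$, with $c_0=c_0^++c_0^-$. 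Since $\a\qq(\a)=\QQ(\a)$, the scaling inside the product is exactly $e^{-\QQ(\a)t}$ at the level of $\a$-th powers, which is why $W_t$ and its normalisation $e^{-\QQ(\a)t}W_t$ govern the limit. The cases $x<0$ and $x=0$ are immediate: $\frH_t\ge0$ forces the distribution function to vanish for $x<0$, and the value at $x=0$ will be recovered at the very end by monotone convergence.

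Next I would establish that $M_t:=e^{-\QQ(\a)t}W_t$ converges almost surely, as $t\to+\infty$, to $Z_\infty(\a)$. Differentiating along the Yule dynamics (a split at rate $\nu_t$ replaces one weight $\b_{I,n}$ by $L\b_{I,n}$ and $R\b_{I,n}$, with $I$ uniform) shows that the drift of $W_t$ equals $\QQ(\a)W_t$, so $M_t$ is a non-negative martingale and converges a.s. To identify the limit I would use the self-similarity of the Yule tree: if $T_1\sim\mathrm{Exp}(1)$ is the first split time and $e^{-T_1}\stackrel{\CL}{=}\Th$ is uniform on $(0,1)$, then after $T_1$ the two subtrees evolve as independent copies of the whole process, whence
\begin{equation*}
M_\infty\stackrel{\CL}{=}\Th^{\QQ(\a)}\big(L^{\a}M_\infty^{(1)}+R^{\a}M_\infty^{(2)}\big),
\end{equation*}
with $M_\infty^{(1)},M_\infty^{(2)}$ independent copies of $M_\infty$, independent of $(L,R)$ and of $\Th$. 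This is precisely the fixed point equation \eqref{eq.fix.mix}. Together with the $L^1$-convergence of the martingale (so that $\E[M_\infty]=1$), established as in \cite{BaLa} under the standing assumption $\qq(\d)<\qq(\a)$ for some $\d>\a$, this identifies $M_\infty$ with $Z_\infty(\a)$.

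The heart of the argument, and the step I expect to be the main obstacle, is to turn the exact product identity into the claimed mixture. The natural heuristic $\prod_j\big(1-\bar F_{|X|}(y_j)\big)\approx\exp\big(-\sum_j\bar F_{|X|}(y_j)\big)$ together with $\bar F_{|X|}(y_j)\approx c_0\,\b_{j,\nu_t}^{\a}/(x^{\a}e^{\QQ(\a)t})$ is only legitimate when every argument $y_j=x\,e^{\qq(\a)t}/\b_{j,\nu_t}$ is uniformly large, i.e. when the largest weight is negligible on the relevant scale. This is exactly where the hypothesis $\qq(\d)<\qq(\a)$ for some $\d>\a$ enters. The same drift computation shows that $e^{-\QQ(\d)t}\sum_j\b_{j,\nu_t}^{\d}$ is a non-negative martingale, hence a.s. bounded, so that $\max_j\b_{j,\nu_t}^{\a}\le\big(\sum_j\b_{j,\nu_t}^{\d}\big)^{\a/\d}$ yields
\begin{equation*}
e^{-\QQ(\a)t}\max_{1\le j\le\nu_t}\b_{j,\nu_t}^{\a}\le\big(e^{-\QQ(\d)t}\textstyle\sum_j\b_{j,\nu_t}^{\d}\big)^{\a/\d}\,e^{\a(\qq(\d)-\qq(\a))t}\longrightarrow0\quad\text{a.s.}
\end{equation*}
Hence $\min_j y_j\to+\infty$ a.s., the uniform version of the regular-variation estimate $\bar F_{|X|}(z)\sim c_0z^{-\a}$ applies term by term, and the logarithm of the product equals $-(1+o(1))\,c_0x^{-\a}e^{-\QQ(\a)t}W_t\to-c_0x^{-\a}Z_\infty(\a)$ almost surely.

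Finally, since the products are bounded by $1$, bounded convergence gives
\begin{equation*}
\P\{e^{-\qq(\a)t}\frH_t\le x\}\longrightarrow\E\big[e^{-c_0x^{-\a}Z_\infty(\a)}\big]\qquad(x>0),
\end{equation*}
which is a continuous distribution function on $(0,+\infty)$; letting $x\downarrow0$ and using monotone convergence yields the value $\P\{Z_\infty(\a)=0\}$ at $x=0$, so that the limiting law is exactly \eqref{maxlimit} and the convergence holds at every continuity point. The only genuinely delicate ingredients are the uniform integrability of $M_t$ (needed for $\E[Z_\infty(\a)]=1$) and the uniform control of the regular-variation error over the $\nu_t\to+\infty$ factors, both of which are handled through the moment condition $\qq(\d)<\qq(\a)$.
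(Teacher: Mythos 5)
Your proposal is correct, and it reaches \eqref{maxlimit} by a genuinely different technical route, though with the same skeleton as the paper (exact product identity by conditioning, asymptotics of $e^{-\CS(\a)t}M_{\nu_t}(\a)$, negligibility of the largest weight, bounded convergence, and a separate treatment of $x=0$). Two real differences. First, the paper obtains the weight-sum limit by combining the discrete-time martingale $\tilde M_n(\a)=M_n(\a)/m_n(\a)$, whose $L^1$ convergence is quoted from \cite{BaLa}, with the Yule limit $e^{-t}\nu_t\to\CE$ (Proposition \ref{Lemma7new}), and it also cites \cite{BaLa} for the fixed-point property of $Z_\infty(\a)$; you instead work with the continuous-time additive martingale $e^{-\CS(\a)t}\sum_{j=1}^{\nu_t}\b_{j,\nu_t}^\a$ directly and re-derive \eqref{eq.fix.mix} from the first-split decomposition, with $e^{-T_1}$ uniform on $(0,1)$ supplying the factor $\Theta^{\CS(\a)}$ --- a self-contained derivation the paper delegates to the literature (you should just note that $\E[M_{\nu_t}(\a)]=e^{\CS(\a)t}<+\infty$, so the local martingale is a true martingale, and that uniqueness of the mean-one solution of \eqref{eq.fix.mix} is still imported from \cite{BaLa}). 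Second, and more substantively, the product-to-exponential step differs: the paper controls the error term $\Lambda_t(x)$ in $L^1$, via $|\prod_i a_i-\prod_i b_i|\le\sum_i|a_i-b_i|$ and $|1-u-e^{-u}|\le C_r u^r$ with $r=\d/\a>1$, which costs the moment identity $\E[e^{-\CS(r\a)t}M_{\nu_t}(r\a)]=1$, the decay factor $e^{-r\a(\qq(\a)-\qq(r\a))t}$, and a generalized dominated convergence argument for the $\bar\CR$ term; you take logarithms pathwise, which requires \emph{almost sure} (not merely in-probability, as in the paper's \eqref{betatozero}) smallness of $e^{-\qq(\a)t}\b_{(\nu_t)}$ --- and your bound $\max_j\b_{j,\nu_t}^\a\le\big(\sum_j\b_{j,\nu_t}^\d\big)^{\a/\d}$ through the $\d$-level martingale supplies exactly that, in fact strengthening the paper's lemma. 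Your route is more elementary at this point: once the products (all bounded by $1$) converge a.s., bounded convergence finishes the proof with no moment estimate on $M_{\nu_t}(r\a)$ at all; the paper's quantitative $L^1$ decomposition, by contrast, is of precisely the type it reuses for the large-deviation estimates of Theorem \ref{main.thm}. Your closing step at $x=0$ (right-continuity of the limit law plus convergence at every $x\neq0$, a dense set) is equivalent to the paper's squeeze argument and complete as stated.
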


It is useful to note that Theorem \ref{maxlimittheorem} states that the law of $\frH_\infty$ is a scale mixture of Fr\'echet distributions.

\section{Main results: large deviations for $\rho_t$}\label{S:mainresults} 

As a consequence of Theorems \ref{thm2}-\ref{thm1}, one has that, if $x_t \to +\infty$ as $t \to +\infty$, then 
\[
\lim_{t \to +\infty} P\{|e^{-\qq(\a)t}V_t| > x_t\} =0.
\]
The main result of this paper concerns the study of the speed of convergence of such a probability to zero under suitable conditions
on the function $\qq(s)$.
In order to state the results, we need some more notation. 
When $\CS(2\a)<+\infty$ let $h(t):[0,+\infty) \to [0,+\infty)$ be the function  
\begin{equation}\label{defhA}
h(t):=
\left\{%
\begin{array}{ll}
t & \text{if $\qq(2\a)<\qq(\a)$ and $2\CS(\a)=-1$;} \\
e^{-(2\CS(\a)+1)t} & \text{if $\qq(2\a)<\qq(\a)$ and $2\CS(\a)<-1$;} \\
e^{2\a(\qq(2\a)-\qq(\a))t} & \text{if $\qq(2\a)>\qq(\a)$;} \\
e^{\eta t} & \text{if $\qq(2\a)=\qq(\a)$ and $0<\CS(\a)$ for a fixed $\eta>0$;} \\
te^{-(2\CS(\a)+1)t} & \text{if $\qq(2\a)=\qq(\a)$ and $2\CS(\a)<-1$;} \\
t^2 & \text{if $\qq(2\a)=\qq(\a)$ and $2\CS(\a)=-1$;} \\
t & \text{if $\qq(2\a)=\qq(\a)$ and $-1<2\CS(\a) \leq 0$.} \\
\end{array}
\right.
\end{equation}

\begin{theorem}[Large deviations]\label{main.thm}
Let $\a \in (0,1) \cup (1,2)$ and the hypotheses of {\rm Theorem \ref{thm2}} be in force, 
or let $\a=1$ and the hypotheses of {\rm Theorem \ref{thm1}} hold.  Assume also that $\CS(2\a)<+\infty$ and $c_0:=c_0^++c_0^->0$.
\begin{itemize}
 \item If $\qq(2\a)<\qq(\a)$ and $2\CS(\a)>-1$, then, for every $x_t$ such that $x_t \to +\infty$ as $t \to +\infty$ , one has
\begin{equation}\label{eq.7}
\lim_{t \to +\infty} \frac{x_t^\alpha}{c_0} P\{|e^{-\qq(\a)t}V_t| > x_t\} = \lim_{t \to +\infty} \frac{P\{|e^{-\qq(\a)t}V_t| > x_t\}}{P\{|V_\infty| > x_t\}}=1
\end{equation}
and
\begin{equation}\label{eq.7tre}
\lim_{t \to +\infty} \frac{P\{|e^{-\qq(\a)t}V_t| > x_t\}}{P\{|e^{-\qq(\a)t}\frH_t| > x_t\}}=1.
\end{equation}
\item  If either  $\qq(2\a) \geq \qq(\a)$ or  $2\CS(\a)\leq -1$  and $x_t$ is such that $x_t^{\a-\eps}/h(t) \to +\infty$ as $t \to +\infty$ 
for some $\eps>0$, with $h(t)$ as in \eqref{defhA}, then
\eqref{eq.7}-\eqref{eq.7tre} hold true.
\end{itemize}
\end{theorem}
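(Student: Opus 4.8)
The plan is to condition on the Yule process $\nu_t$ and on the weight array $(\b_{j,\nu_t})_{j}$, reducing the estimate to a large deviation bound for a weighted sum $\sum_j a_j X_j$ of i.i.d.\ heavy-tailed summands with frozen weights $a_j=\b_{j,\nu_t}$ --- precisely the kind of statement developed in Section~\ref{S:weightedsums}. Writing $y_t:=e^{\qq(\a)t}x_t$, so that $\{|e^{-\qq(\a)t}V_t|>x_t\}=\{|V_t|>y_t\}$, the \emph{one-big-jump} principle for such sums asserts that the dominant contribution comes from a single large term, giving, conditionally on the weights,
\[
P\{|V_t|>y_t\mid \nu_t,(\b_{j,\nu_t})_j\}\ \sim\ \sum_{j=1}^{\nu_t}P\{|\b_{j,\nu_t}X_j|>y_t\}\ \sim\ \frac{c_0}{y_t^{\a}}\sum_{j=1}^{\nu_t}\b_{j,\nu_t}^{\a},
\]
where I have used $P\{|X_1|>x\}\sim c_0/x^{\a}$, which follows from \eqref{stabledomain}. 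The very same leading term governs $\frH_t=\max_j|\b_{j,\nu_t}X_j|$, since $P\{\frH_t>y_t\mid\cdots\}=1-\prod_jP\{|\b_{j,\nu_t}X_j|\le y_t\}$ has an identical first-order expansion; this is how I would read off \eqref{eq.7tre}.

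The natural normalised object is
\[
W_t:=e^{-\CS(\a)t}\sum_{j=1}^{\nu_t}\b_{j,\nu_t}^{\a},
\]
and since $\a\qq(\a)=\CS(\a)$ one has $y_t^{\a}=e^{\CS(\a)t}x_t^{\a}$, which collapses the conditional main term above to $c_0W_t/x_t^{\a}$. A direct computation from \eqref{recursion} (the expected $\a$-mass grows by the factor $1+\CS(\a)/n$ at each step, averaged over the Yule law) gives $\E\bigl[\sum_j\b_{j,\nu_t}^{\a}\bigr]=e^{\CS(\a)t}$, hence $\E[W_t]=1$; and following the contraction/martingale analysis of \cite{BaLa} one obtains $W_t\Rightarrow Z_\infty(\a)$, the mean-one solution of \eqref{eq.fix.mix}. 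Taking expectations in the conditional estimate and justifying the interchange by uniform integrability then yields $\tfrac{x_t^{\a}}{c_0}P\{|V_t|>y_t\}\to\E[Z_\infty(\a)]=1$, the first limit in \eqref{eq.7}. The second limit in \eqref{eq.7} follows once I verify, directly from the stable scale-mixture representation \eqref{characteristic} (respectively \eqref{characteristic2} when $\a=1$), that $P\{|V_\infty|>x\}\sim c_0/x^{\a}$; this is the computation of the tail constant of an $\a$-stable law averaged against $Z_\infty(\a)$, again using $\E[Z_\infty(\a)]=1$.

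The quantitative core is the control of the error, namely the event that two or more summands are simultaneously large together with the fluctuation of $W_t$ about its mean. By independence the two-big-jump contribution is of order $y_t^{-2\a}\bigl(\sum_j\b_{j,\nu_t}^{\a}\bigr)^2$, so relative to the main term it is governed by $\E[W_t^2]/x_t^{\a}$. I would estimate $\E[W_t^2]$ through the second-moment recursion induced by \eqref{recursion}: with $A_n=\sum_{j\le n}\b_{j,n}^{\a}$ one has $A_{n+1}=A_n+(L_n^{\a}+R_n^{\a}-1)\b_{I_n,n}^{\a}$, whence, writing $\CF_n$ for the natural filtration of the construction,
\[
\E[A_{n+1}^2\mid\CF_n]=\Bigl(1+\tfrac{2\CS(\a)}{n}\Bigr)A_n^2+\tfrac{1}{n}\,\E[(L^{\a}+R^{\a}-1)^2]\sum_{j\le n}\b_{j,n}^{2\a},
\]
where $\CS(2\a)<+\infty$ ensures the coefficient is finite and the forcing term satisfies $\E\bigl[\sum_j\b_{j,n}^{2\a}\bigr]\asymp n^{\CS(2\a)}$. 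The homogeneous rate is $2\CS(\a)$ and the inhomogeneous rate is $\CS(2\a)$, so the larger of the two --- that is, the sign of $\qq(2\a)-\qq(\a)$ --- decides which dominates; averaging the resulting growth over the Yule law, for which $\sum_nP\{\nu_t=n\}n^{c}$ behaves like $e^{ct}$ when $c>-1$, like $t\,e^{-t}$ when $c=-1$, and like $e^{-t}$ when $c<-1$, produces exactly the seven regimes of \eqref{defhA}. One finds that $\E[W_t^2]$ is dominated by $h(t)$ in every case --- bounded precisely when $\qq(2\a)<\qq(\a)$ and $2\CS(\a)>-1$, and otherwise growing at the rate built into $h(t)$ (with the extra polynomial factors in the resonant cases $\qq(2\a)=\qq(\a)$ serving as a safe cushion). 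The hypothesis $x_t^{\a-\eps}/h(t)\to+\infty$ is thus calibrated to force the error ratio $h(t)/x_t^{\a}\to0$ with room to spare, while in the bounded case any $x_t\to+\infty$ suffices.

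I expect the main obstacle to be not this heuristic but making the one-big-jump equivalence \emph{uniform} in $t$: the tail relation $P\{|X_1|>x\}\sim c_0/x^{\a}$ must be invoked at the random, weight-dependent thresholds $y_t/\b_{j,\nu_t}$, and the $\sim$ must be upgraded to two-sided inequalities with remainders that survive summation over $j$ and expectation over the heavy-tailed, strongly correlated array. I anticipate splitting each sum into small- and large-weight parts, treating the small-weight bulk by a truncation/Chebyshev argument whose cost is exactly the second moment estimated above (this also supplies the matching lower bound, via ``one term exceeds $y_t(1+\d)$ while the remainder stays $o(y_t)$''), and establishing the uniform integrability required to promote $W_t\Rightarrow Z_\infty(\a)$ to convergence of expectations. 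The delicate resonant sub-cases $\qq(2\a)=\qq(\a)$, where $h(t)$ acquires the factors $t$ or $t^2$, are where the recursion analysis and the Yule-mixing asymptotics must be carried out most carefully.
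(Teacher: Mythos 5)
Your proposal is correct and follows essentially the same route as the paper: conditioning on the weight array and invoking two-sided Heyde-type bounds for weighted sums (the content of Lemma~\ref{lemma4.1}, with exactly the truncation/Chebyshev and Bonferroni splits you anticipate), the $L^1$ convergence $e^{-\CS(\a)t}M_{\nu_t}(\a)\to Z_\infty(\a)$ with mean one (Proposition~\ref{Lemma7new}, plus Lemma~\ref{lemma6} for the remainder term in the lower bound), the second-moment recursion from \eqref{recursion} averaged over the Yule law to produce the regimes of \eqref{defhA} (Lemmas~\ref{Lemma5BL} and~\ref{Lemma6new}, where your recursion for $A_n=M_n(\a)$ is an equivalent unnormalized form of the paper's martingale-increment computation), and the stable scale-mixture tail $P\{|V_\infty|>x\}\sim c_0/x^\a$ via $\E[Z_\infty(\a)]=1$ and $\E[Z_\infty(\a)^{\d/\a}]<+\infty$. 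The technical obstacles you flag (uniformity of the one-big-jump asymptotics at the random thresholds, and promotion of weak convergence to convergence of expectations) are precisely the points the paper handles with the uniform remainder $\bar\CR$ and the generalized dominated convergence theorem.
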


\begin{remark}
Let us consider {\rm Theorem \ref{main.thm}} in the particular case in which $\E[L^\a+R^\a]=1$ and hence $0=2\CS (\a)> -1$. Then, if $\E[L^{2\a}+R^{2\a}]<1$ and  $x_t \to +\infty$ as $t \to +\infty$,  one has
\begin{equation}\label{eq.7bis}
\lim_{t \to +\infty} \frac{x_t^\alpha}{c_0} P\{|V_t| > x_t\} = \lim_{t \to +\infty} \frac{P\{|V_t| > x_t\}}{P\{|V_\infty| > x_t\}}
= \lim_{t \to +\infty} \frac{P\{|V_t| > x_t\}}{P\{|\frH_t| > x_t\}}=1
\end{equation}
where the law of  $V_\infty$ is a steady state for equation \eqref{eq.1}.
\end{remark}

As pointed out in the Introduction, the results stated in the previous theorem are related to large deviations for sums of i.i.d. random variables:
{\it Let  $\a \in (0,1) \cup (1,2)$  and let $(X_n)_{n \geq 1}$ be a sequence of i.i.d. random variables
in the domain of normal  attraction of an $\a$-stable law, 
centered for $\a>1$, then,  
\begin{equation}\label{largeiid}
 \lim_{n \to + \infty} \frac{P\Big \{\Big|{n^{-\frac{1}{\a}}} \sum_{i=1}^n X_i\Big|>x_n\Big\}}{n  P\{|X_1|>n^{1/\alpha} x_n\}} =
 \lim_{n \to + \infty} \frac{P\Big \{\Big|{n^{-\frac{1}{\a}}} \sum_{i=1}^n X_i\Big|>x_n\Big\}}{P\{\max_{j=1,\dots,n}|X_j|>n^{1/\alpha} x_n\}} 
 =1
\end{equation}
whenever $x_n \to +\infty$. 
}
See \cite{Heyde2} and \cite{Heyde4}.
It follows from 
\eqref{stabledomain} that  
$P\{|X_1|> n^{\frac{1}{\a}}x_n\}  \sim c_0/(nx_n^\a)$.
Moreover, if $S_\a$ is the $\a$-stable random variable limit of $n^{-\frac{1}{\a}} \sum_{i=1}^n X_i $,
then, $P\{|S_\a|> x_n\} \sim c_0/x_n^\a$, since each stable random variable belongs to its own domain of normal attraction. Consequently
\begin{equation}\label{largeiid2}
 \lim_{n \to + \infty} \frac{P\Big \{\Big|{n^{-\frac{1}{\a}}} \sum_{i=1}^n X_i\Big|>x_n\Big\}}{P\{|S_\a|> x_n\}}=
 \lim_{n \to + \infty} \frac{x_n^\a}{c_0}P\Big \{\Big|{n^{-\frac{1}{\a}}} \sum_{i=1}^n X_i\Big|>x_n\Big\} =1.
\end{equation}

At this stage, it should be clear that equations \eqref{eq.7}-\eqref{eq.7tre}-\eqref{eq.7bis} provide 
analogous results for our processes.

\section{Large deviation for sum of weighted i.i.d. random variables}\label{S:weightedsums}
The present section deals with the study of the probability of large deviations for 
 weighted sums of i.i.d. random variables. This study is a  generalization of the large deviation estimates presented in \cite{Heyde2,Heyde4} and, besides the interest 
 it could hold in itself, it is the first step in the proof of Theorem \ref{main.thm}.

Let $(X_j)_{j \geq 1}$ be a sequence of i.i.d. random variables with common distirbution function $F_0$
and $[b_{jn}: j=1,\dots,n; n \geq 1]$ be an array of non-negative weights. Let
\[
 S_{n}:=\sum_{j=1}^n b_{jn} X_j,
\]
$b(n):=\max\{b_{jn}: j=1,\dots,n\}$ and $b^{(1:n)}:=(b_{1n},\dots,b_{nn})$.

If $F_0$ satisfies \eqref{stabledomain}, for every $x>0$ define
\[
 \begin{split}
& \CR(x):= \frac{x^\alpha}{c_0} P\{|X_1|>x\}-1  \qquad (c_0:=c_0^+ + c_0^-)  \\
& \bar \CR(x):=\sup_{y:y\geq x} |\CR(x)|. \\
\end{split}
\]
Clearly 
\begin{equation}\label{eq.8}
P\{|X_1|>x\}=c_0x^{-\alpha}(1+\CR(x)),
\end{equation}
hence $\|\CR\|_\infty:=\sup_{x>0}|\CR(x)|<+\infty$
and
\begin{equation}\label{eq.8bis}
 \lim_{x \to +\infty} \bar \CR(x)=\lim_{x \to +\infty} \CR(x)=0.
\end{equation}
Finally, set 
\begin{equation}\label{eq.8_3}
K_0:=c_0(\|\CR\|_\infty+1)
\end{equation}
and
\begin{equation*}
 \Delta_{b^{(1:n)}}^{(n)}(y):=P\{|S_n|+ b(n)|X_1| \leq y  \}.
\end{equation*}

\begin{lemma}\label{lemma4.1} Assume that $F_0$ satisfies \eqref{stabledomain}  with $c_0=c_0^++c_0^->0$. 
Moreover, if $\a=1$ assume that $c_0^+=c_0^-$ and that \eqref{gamma} holds, while  if $\a>1$
 assume that $\E[X_1]=0$. Then, for every $x>0$, $n \geq 1$, $0<\eps<1$ and $\gamma>0$, 
the following inequalities are valid
\begin{equation}\label{lowerbound}
\begin{split}
x^\alpha P\{|S_n|>x\} \geq& 
\frac{\Delta_{b^{(1:n)}}^{(n)}(\eps x)}{(1+\eps)^\alpha} c_0
\left (1-\bar \CR\left( \frac{x(1+\eps)}{b(n)}\right)\right) \sum_{j=1}^n b_{jn}^\alpha 
\\ 
&  
-\frac{ K_0 ^2}{x^\alpha(1+\eps)^{2\alpha}} \left (\sum_{j=1}^nb_{jn}^\alpha \right )^2  \\
\end{split}
\end{equation}
and
\begin{equation}\label{upperbound}
\begin{split}
x^\alpha P\{|S_n|>x\} & \leq 
   \Big [ \frac{c_0 }{(1-\eps)^\alpha} \left (1+\bar \CR \left(\frac{x(1-\eps)}{b(n)}\right)\right) 
+\frac{2K_0}{\eps^2(2-\alpha) x^{(2-\alpha)(1-\gamma)}} \Big]   \sum_{j=1}^n b_{jn}^\alpha
\\ 
&  
+   \left [ \frac{K_0^2}{x^{\alpha(2\gamma-1)}} +
\frac{K_1}{\eps^2 x^{2-\alpha+2(\alpha-1)\gamma}} \right]  \left ( \sum_{j=1}^nb_{jn}^\alpha \right )^2
\\
\end{split}
\end{equation}
where $K_1=K_0^2/(1-\alpha)^2$ if $\alpha<1$, $K_1=K_0^2\alpha^2/(1-\alpha)^2$ if $\a>1$ and $K_1=(\gamma_0+\sup_R|\int_{(-R,R)}y dF_0(y)-\gamma_0|)^2$ if $\a=1$. 
Moreover,
\begin{equation}\label{maxineq}
\begin{split}
c_0  \sum_{j=1}^{n}  b^\a_{jn} 
\Big(1-\bar \CR\Big(\frac{x}{b(n)}\Big)\Big) 
&-\frac{K_0^2}{x^{\a}}   \left( \sum_{j=1}^{n} b^\a_{jn}  \right)^2 
\leq x^\a P\{\max_{1 \leq j \leq  n}|b_{jn} X_j | >x \} \\ 
& \leq  c_0  \sum_{j=1}^{n}  b^\a_{jn} 
\Big(1+\bar \CR\Big(\frac{x}{b(n)}\Big)\Big). 
\\
\end{split}
\end{equation}
\end{lemma}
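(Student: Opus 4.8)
The three displays all hinge on the exact tail representation \eqref{eq.8}, namely $P\{|X_1|>x\}=c_0x^{-\a}(1+\CR(x))$ with $|\CR|\le\|\CR\|_\infty$ and $K_0=c_0(\|\CR\|_\infty+1)$, combined with the single-big-summand heuristic. Throughout I write $S_n^{(j)}:=S_n-b_{jn}X_j=\sum_{i\ne j}b_{in}X_i$ and recall that for fixed $j$ the variable $X_j$ is independent of $S_n^{(j)}$ while the array $[b_{jn}]$ is deterministic. I would prove \eqref{maxineq} first, as it isolates the combinatorics. With $A_j:=\{b_{jn}|X_j|>x\}$, Bonferroni gives $\sum_jP(A_j)-\sum_{i<j}P(A_i\cap A_j)\le P\{\max_j b_{jn}|X_j|>x\}\le\sum_jP(A_j)$. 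Since $b_{jn}\le b(n)$ forces $|\CR(x/b_{jn})|\le\bar \CR(x/b(n))$, \eqref{eq.8} yields $x^\a\sum_jP(A_j)=c_0\sum_jb_{jn}^\a(1+\CR(x/b_{jn}))$, which lies between $c_0\sum_jb_{jn}^\a(1\mp\bar \CR(x/b(n)))$; this is the upper bound and the main part of the lower bound. For the correction, independence and $P(A_j)\le K_0b_{jn}^\a x^{-\a}$ give $x^\a\sum_{i<j}P(A_i\cap A_j)\le\frac12K_0^2x^{-\a}(\sum_jb_{jn}^\a)^2$, dominated by the stated $K_0^2x^{-\a}(\sum_jb_{jn}^\a)^2$.

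\textbf{Lower bound \eqref{lowerbound}.} I would run the same idea for the sum. Put $G_j:=\{b_{jn}|X_j|>(1+\eps)x\}$ and $R_j:=\{|S_n^{(j)}|\le\eps x\}$; on $G_j\cap R_j$ one has $|S_n|\ge b_{jn}|X_j|-|S_n^{(j)}|>x$, so $\bigcup_j(G_j\cap R_j)\subseteq\{|S_n|>x\}$, and Bonferroni gives $P\{|S_n|>x\}\ge\sum_jP(G_j)P(R_j)-\sum_{i<j}P(G_i)P(G_j)$, using $X_j$ independent of $S_n^{(j)}$ and $(G_i\cap R_i)\cap(G_j\cap R_j)\subseteq G_i\cap G_j$. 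From \eqref{eq.8}, $P(G_j)\ge c_0b_{jn}^\a((1+\eps)x)^{-\a}(1-\bar \CR(x(1+\eps)/b(n)))$, while each remainder probability is bounded below, uniformly in $j$, by $\Delta_{b^{(1:n)}}^{(n)}(\eps x)$ through the inclusion $\{|S_n|+b(n)|X_j|\le\eps x\}\subseteq R_j$ (since $|S_n^{(j)}|\le|S_n|+b(n)|X_j|$). Multiplying the main sum by $x^\a$ produces the first term of \eqref{lowerbound}; bounding $P(G_j)\le K_0b_{jn}^\a((1+\eps)x)^{-\a}$ turns the overlap into $\frac12K_0^2(1+\eps)^{-2\a}x^{-\a}(\sum_jb_{jn}^\a)^2$, the subtracted term.

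\textbf{Upper bound \eqref{upperbound}.} Here I would truncate at level $u:=x^\g$: set $S_n':=\sum_jb_{jn}X_j\mathbf{1}\{b_{jn}|X_j|\le x^\g\}$ and $S_n'':=S_n-S_n'$, so that $\{|S_n|>x\}\subseteq\{|S_n''|>(1-\eps)x\}\cup\{|S_n'|>\eps x\}$. The large part splits once more: either some single summand exceeds $(1-\eps)x$, or at least two summands exceed $x^\g$ (a sum of parts each $\le(1-\eps)x$ can surpass $(1-\eps)x$ only with two nonzero parts). The first alternative, handled by the union bound and \eqref{eq.8}, gives $\frac{c_0}{(1-\eps)^\a}(1+\bar \CR(x(1-\eps)/b(n)))\sum_jb_{jn}^\a$ after $\times x^\a$; the second, via $\sum_{i<j}P\{b_{in}|X_i|>x^\g\}P\{b_{jn}|X_j|>x^\g\}\le\frac12K_0^2x^{-2\g\a}(\sum_jb_{jn}^\a)^2$, becomes the $K_0^2x^{-\a(2\g-1)}(\sum_jb_{jn}^\a)^2$ term. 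For the small part I would apply Markov to the second moment, $P\{|S_n'|>\eps x\}\le\eps^{-2}x^{-2}(\mathrm{Var}(S_n')+(\E[S_n'])^2)$. Since $\mathrm{Var}(S_n')\le\sum_j\E[(b_{jn}X_j)^2\mathbf{1}\{b_{jn}|X_j|\le x^\g\}]\le\frac{2K_0}{2-\a}x^{\g(2-\a)}\sum_jb_{jn}^\a$, the variance contributes the $\frac{2K_0}{\eps^2(2-\a)}x^{-(2-\a)(1-\g)}\sum_jb_{jn}^\a$ term; the squared mean contributes the last, $K_1$-term.

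\textbf{Main obstacle.} The delicate step is the centering $\E[S_n']=\sum_jb_{jn}\E[X_j\mathbf{1}\{b_{jn}|X_j|\le x^\g\}]$, which is precisely where the three regimes of $K_1$ and the hypotheses on $F_0$ enter. For $\a<1$ one uses $|\E[X_j\mathbf{1}\{|X_j|\le v\}]|\le\frac{K_0}{1-\a}v^{1-\a}$; for $\a>1$ one exploits $\E X_j=0$ to rewrite the truncated mean as $-\E[X_j\mathbf{1}\{|X_j|>v\}]$ and bounds it by $\frac{K_0\a}{\a-1}v^{1-\a}$; for $\a=1$ one invokes \eqref{gamma}, so that $\E[X_j\mathbf{1}\{b_{jn}|X_j|\le x^\g\}]$ stays within $\gamma_0+\sup_R|\int_{(-R,R)}y\,dF_0(y)-\gamma_0|$ of $0$. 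In each case $|\E[S_n']|\le K_1^{1/2}x^{\g(1-\a)}\sum_jb_{jn}^\a$ (with the exponent $\g(1-\a)$ vanishing when $\a=1$), and $(\E[S_n'])^2\eps^{-2}x^{-2}$ times $x^\a$ is exactly the $\frac{K_1}{\eps^2}x^{-(2-\a+2(\a-1)\g)}(\sum_jb_{jn}^\a)^2$ term. I expect the hardest bookkeeping to be the $\a=1$ centering, where the truncated first moment is only conditionally convergent and the symmetry $c_0^+=c_0^-$ together with \eqref{gamma} must be used to keep $\E[S_n']$ bounded; a secondary nuisance is checking that the uniform-in-$j$ lower bound of $P(R_j)$ by $\Delta_{b^{(1:n)}}^{(n)}(\eps x)$ in the lower bound survives the differing weights.
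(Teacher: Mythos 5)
Your proposal takes essentially the same route as the paper's own proof: the identical Bonferroni/single-big-jump decomposition with the events $\{b_{jn}|X_j|>(1+\eps)x\}\cap\{|S_{n,j}|\le \eps x\}$ for \eqref{lowerbound}, the same truncation at level $x^\gamma$ with the three events (one summand large, two summands exceeding $x^\gamma$, truncated sum large) handled by the union bound, independence, and Chebyshev with the same three-case truncated-mean estimates producing the same constants $K_1$ for \eqref{upperbound}, and the same Bonferroni argument for \eqref{maxineq}. Even the ``secondary nuisance'' you flag -- whether $P\{|S_{n,j}|\le\eps x\}$ is bounded below uniformly in $j$ by $\Delta^{(n)}_{b^{(1:n)}}(\eps x)$, which is defined through $X_1$ while the joint law of $(S_n,X_j)$ depends on $j$ through the unequal weights -- is present in the paper, which simply asserts the equality $P\{|S_n|+b(n)|X_j|\le\eps x\}=\Delta^{(n)}_{b^{(1:n)}}(\eps x)$ without comment.
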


\begin{proof}
The proof of this lemma is an adaptation to the present case of the techniques used in \cite{Heyde1,Heyde2}.

{\underline{Proof of \eqref{lowerbound}}.} 
Set
\[
 S_{n,k}:= \sum_{1 \leq j \leq n, j \not=k} b_{jn} X_j \qquad k=1,\dots,n
\]
and
\[
A_{j}:=\{ |b_{jn}X_j| > (1+\eps) x, |S_{n,j}| \leq \eps x\}.
\]
 Clearly 
\[
\cup_{j=1}^n A_j \subset \{|S_n| >x \} 
\]
and hence, by Bonferroni inequality, 
\[
P\{|S_n| >x \} \geq \sum_{j=1}^n P(A_j)- \sum_{1 \leq j < k \leq n} P(A_j \cap A_k).
\]
Now, from the independence of the $X_j$'s, one obtains
\[
P(A_j \cap A_k) \leq P\{ |b_{jn}X_j|>(1+\eps)x\}P\{ |b_{kn}X_k|>(1+\eps)x\}.
\]
and
\[
P(A_j) = P\{ |b_{jn}X_j|>(1+\eps)x\}P\{ |S_{n,j}|\leq \eps x\}.
\]
Hence
\begin{equation}\label{1est}
P\{|S_n|>x\}  
\geq \sum_{j=1}^n P\{ |b_{jn}X_j|>(1+\eps)x\}P\{ |S_{n,j}|\leq \eps x\}
-\left( \sum_{j=1}^n P\{ |b_{jn}X_j|>(1+\eps)x\} \right)^2.
\end{equation}
Furthermore, for every $j=1,\dots,n$, 
\begin{equation}\label{2est}
P\{ |S_{n,j}|\leq \eps x\} \geq P\{|S_n|+ b(n)|X_j| \leq \eps x \}
= \Delta_{b^{(1:n)}}^{(n)}(y)
\end{equation}
and from \eqref{eq.8}-\eqref{eq.8_3} one gets
\begin{equation}\label{3est}
\frac{c_0b_{jn}^\alpha}{x^\alpha(1+\eps)^\alpha}\Big(1-\bar \CR\Big(\frac{x(1+\eps)}{b(n)}\Big)\Big) \leq 
P\{ |b_{jn}X_j|>(1+\eps)x\}\leq  \frac{b_{jn}^\alpha}{x^\alpha(1+\eps)^\alpha} K_0.
\end{equation}
Combining \eqref{1est}, \eqref{2est} and \eqref{3est} one obtains \eqref{lowerbound}.

{\underline{Proof of \eqref{upperbound}}.} 
Define
\[
\begin{split}
& Y_{jn}:=b_{jn} X_j \J\{|b_{jn}X_j| \leq x^\gamma \} \\
& \tilde S_n := \sum_{j=1}^n Y_{jn} \\
& E_n:= \cup_{j=1}^n \{|b_{jn} X_j|> (1-\eps)x \} \\
& F_n:= \cup_{1 \leq i <j \leq n}  \{|b_{jn} X_j|> x^\gamma,|b_{in} X_i|> x^\gamma \} \\
& G_n:=\{|\tilde S_n| > \eps x \} \\
\end{split}
\]
It is easy to see that $\{ |S_n| > x\} \subset E_n \cup F_n \cup G_n$ and hence,
\begin{equation}\label{eq.15uno}
 P( |S_n| > x) \leq P(E_n)+P(F_n)+P(G_n).
\end{equation}
From \eqref{eq.8} one obtains
\begin{equation}\label{eq.15}
\begin{split}
P(E_n) & \leq \sum_{j=1}^n P(  |b_{jn}X_j| > (1-\eps) x)
 = \sum_{j=1}^n \frac{c_0 b_{jn}^\alpha}{x^\alpha(1-\eps)^\alpha}\Big(1+\CR\Big(\frac{x(1-\eps)}{b_{jn}}\Big)\Big) \\
  &\leq  \sum_{j=1}^n \frac{c_0 b_{jn}^\alpha}{x^\alpha(1-\eps)^\alpha}\Big(1+\bar \CR\Big(\frac{x(1-\eps)}{b(n)}\Big)\Big)
\end{split}
\end{equation}
and
\begin{equation}\label{eq.16}
\begin{split}
P(F_n) & \leq \sum_{1 \leq i <j \leq n} P(  |b_{in}X_i| > x^\gamma ) P(  |b_{jn}X_j| > x^\gamma ) \\
&=  \sum_{1 \leq i <j \leq n} \frac{c_0^2 b_{in}^\alpha b_{jn}^\alpha}{x^{2\gamma\alpha}}
\Big(1+\CR\Big(\frac{x^\gamma}{b_{in}}\Big)\Big)\Big(1+\CR\Big(\frac{x^\gamma}{b_{jn}}\Big)\Big)\\
& \leq K_0^2 \Big ( \sum_{j=1}^n b_{jn}^\alpha \Big)^2 x^{-2\a\gamma}
\end{split}
\end{equation}
where $K_0$ is defined in \eqref{eq.8_3} and $\CR(x^\gamma/0):=0$.
From Chebyshev inequality 
\begin{equation}\label{eq.16bis}
\begin{split}
P(G_n) & \leq \frac{1}{\eps^2x^2} \E[\tilde S_n^2]
\leq
\frac{1}{\eps^2x^2} \E\Big[\sum_{j=1}^n Y_{jn}^2 +  \sum_{1\leq i,j \leq n} Y_{in}Y_{jn}\Big]\\
& \leq \frac{1}{\eps^2x^2} \Big (\sum_{j=1}^n \E[ Y_{jn}^2]+ \Big (\sum_{j=1}^n| \E[ Y_{jn}]| \Big)^2 \Big)
 \end{split}
\end{equation}

Note that if $b_{jn}=0$ then
$\E[Y_{jn}^2]=|\E[Y_{jn}]|=0$, hence from now on we assume that $b_{jn}>0$. 
Now
\[
\E[Y_{jn}^2]=b_{jn}^2 \E[|X_{j}|^2 \J\{|X_j|\leq x^\g/b_{jn}\}]
\leq 2 b_{jn}^2 \int_0^{ x^\g/b_{jn}} y P\{|X_1|>y\}dy.
\]
Since $P\{|X_1|>y\}\leq K_0 y^{-\alpha}$, it follows that
\begin{equation}\label{eq.17}
\E[Y_{jn}^2] \leq \frac{2K_0}{2-\alpha} b_{jn}^\alpha x^{(2-\a)\gamma}.  
\end{equation}
It remains to consider $|\E[Y_{jn}]|$. If $\alpha<1$,
then
\begin{equation}\label{eq.18}
|\E[Y_{jn}]| \leq b_{jn} \int_{0}^{x^\g/b_{jn}} P\{|X_1|>y\} dy  \leq b_{jn} K_0 \int_{0}^{x^\g/b_{jn}} y^{-\a}dy
= \frac{ b^\a_{jn} K_0}{1-\alpha} x^{(1-\a)\gamma}.
\end{equation}
If $\a>1$ and $\E[X_1]=\int y dF_0(y)=0$, then
\begin{equation}\label{eq.19}
\begin{split}
|\E[Y_{jn}]| &=b_{jn} \Big |\int_{\{y: |y| \leq x^\g/b_{jn}\}}  ydF_0(y) \Big |
=b_{jn} \Big | \int_{\{y: |y| > x^\g/b_{jn}\}} y  dF_0(y)\Big  |\\
& \leq b_{jn} \Big [ \int_{x^\g/b_{jn}}^{+\infty} P\{|X_1|>y\} dy
 +{\frac{x^\g}{b_{jn}}} P\Big\{|X_1|>\frac{x^\g}{b_{jn}}\Big\} \Big]\\
& 
\leq b_{jn} K_0 \Big [ \int_{x^\g/b_{jn}}^{+\infty}y^{-\a} dy
 +{x^{\g(1-\a)} b_{jn}^{\a-1}} \Big]= b_{jn}^\a  K_0  \frac{\a}{\a-1}x^{(1-\a)\gamma}.
\\
\end{split}
\end{equation}
Finally, if $\a=1$, by assumption
\[
K:=\sup_{R>0} |\int_{(-R,R)} yF_0(y)-\gamma_0|<+\infty.
\]
Hence, in this case, one gets
\begin{equation}\label{eq.19BIS}
|\E[Y_{jn}]| \leq b_{jn} \Big |\int_{\{y: |y| \leq x^\g/b_{jn}\}}  ydF_0(y) -\gamma_0\Big |+b_{jn}  \gamma_0 
\leq  b_{jn} (\gamma_0+K).
\end{equation}
Combining \eqref{eq.15uno}-\eqref{eq.19BIS} one gets \eqref{upperbound}.

{\underline{Proof of \eqref{maxineq}}.} 
By Bonferroni inequality, using once again \eqref{eq.8} and \eqref{eq.8_3}, one gets
\[
\begin{split}
P\{\max_{1 \leq j \leq  n}|b_{jn} X_j | >x \} &\geq 
\sum_{j=1}^{n} P\{|b_{jn} X_j | >x \} 
- \sum_{1 \leq j < k \leq n} P\{ |b_{jn} X_j | >x, |b_{kn} X_k | >x  \}\\
&\geq \frac{c_0 }{x^\a} \sum_{j=1}^{n}  b^\a_{jn} 
\Big(1-\bar \CR\Big(\frac{x}{b(n)}\Big)\Big) 
-\frac{K_0^2}{x^{2\a}}   \Big( \sum_{j=1}^{n} b^\a_{jn}  \Big)^2 
\\
\end{split}
\]
and
\[
P\{\max_{1 \leq j \leq  n}|b_{j,n} X_j | >x \} \leq 
\sum_{j=1}^{n} P\{|b_{jn} X_j | >x \} \leq
\frac{c_0 }{x^\a} \sum_{j=1}^{n}  b^\a_{jn} 
\Big(1+\bar \CR\Big(\frac{x}{b(n)}\Big)\Big) 
\]
that yields \eqref{maxineq}.
\end{proof} 

\begin{remark}\label{remark1}
Notice that if $\gamma \in (1/2,1)$ and $\a \in (0,2)$, then  $(2-\a)(1-\gamma)>0$ and $\a(2\gamma-1)>0$. Moreover, 
if $\gamma<1$ and $\a<1$, then $(2-\a)+2(\a-1)\gamma>\a>0$, while, if $\a>1$, then
$(2-\a)+2(\a-1)\gamma \uparrow \a$ for $\gamma \uparrow 1$. Finally, $\a(2\gamma-1)\uparrow \a$ when $\gamma \uparrow 1$.
\end{remark}
 
 A simple consequence of Lemma \ref{lemma4.1} and Remark \ref{remark1} is the following large deviations result for
the weighted sum  $S_n=\sum_{j=1}^n b_{jn} X_j$.

\begin{corollary} Assume that $F_0$ satisfies \eqref{stabledomain}  with $c_0=c_0^++c_0^->0$. 
If $\a=1$ assume also that $c_0^+=c_0^-$ and that \eqref{gamma} holds, while  if $\a>1$
 assume that $\E[X_1]=0$. If $b(n) \to 0$, $\sum_{j=1}^{n} b_{jn}^\a \to 1$ 
and $x_n \to +\infty$, then
\[
\lim_{n \to +\infty} {x_n^\alpha} P\{|S_n| > x_n\} =c_0 .
\]
\end{corollary}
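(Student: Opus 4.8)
The plan is to obtain the corollary by sandwiching the quantity $x_n^\alpha P\{|S_n| > x_n\}$ between the two inequalities \eqref{lowerbound} and \eqref{upperbound} of Lemma \ref{lemma4.1}, evaluated at $x = x_n$, and then sending $\eps \downarrow 0$ after passing to the limit in $n$. Throughout I would fix once and for all an exponent $\gamma \in (1/2,1)$, so that by Remark \ref{remark1} every exponent $(2-\alpha)(1-\gamma)$, $\alpha(2\gamma-1)$ and $2-\alpha+2(\alpha-1)\gamma$ occurring in the powers of $x$ in \eqref{upperbound} is strictly positive. Note also that $x_n \to +\infty$ and $b(n) \to 0$ force $x_n/b(n) \to +\infty$, so that by \eqref{eq.8bis} all the factors of the form $\bar\CR(x_n(1\pm\eps)/b(n))$ tend to $0$.

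First I would dispose of the upper bound. Letting $n \to +\infty$ in \eqref{upperbound}, the factor $\bar\CR(x_n(1-\eps)/b(n)) \to 0$, every term carrying a strictly positive power of $x_n$ in the denominator vanishes, and $\sum_{j=1}^n b_{jn}^\alpha \to 1$; hence $\limsup_n x_n^\alpha P\{|S_n| > x_n\} \leq c_0/(1-\eps)^\alpha$. Since $\eps \in (0,1)$ is arbitrary this already yields $\limsup_n x_n^\alpha P\{|S_n| > x_n\} \leq c_0$.

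The lower bound requires the extra ingredient $\Delta_{b^{(1:n)}}^{(n)}(\eps x_n) = P\{|S_n| + b(n)|X_1| \leq \eps x_n\} \to 1$, and establishing this is the step I expect to be the main obstacle, since it hinges on the tightness of the family $\{S_n\}_n$. I would extract tightness from \eqref{upperbound} itself: reading that inequality for a fixed $\eps$, the fixed $\gamma$ above, and an arbitrary fixed $x \geq 1$, and using $\|\CR\|_\infty < +\infty$ together with the boundedness of $\sum_j b_{jn}^\alpha$, one gets a bound of the form $P\{|S_n| > x\} \leq C\, x^{-\alpha}$ valid for all $x \geq 1$ and all sufficiently large $n$, with $C$ independent of both. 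This gives $\sup_n P\{|S_n| > x\} \to 0$ as $x \to +\infty$ (the finitely many small $n$ being individually tight), i.e. tightness of $\{S_n\}_n$. As $b(n)|X_1| \to 0$ almost surely, the family $\{|S_n| + b(n)|X_1|\}_n$ is then tight as well, and since $\eps x_n \to +\infty$ I may conclude $\Delta_{b^{(1:n)}}^{(n)}(\eps x_n) \to 1$.

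With this at hand, passing to the limit in \eqref{lowerbound} is routine: $\Delta_{b^{(1:n)}}^{(n)}(\eps x_n) \to 1$, $\bar\CR(x_n(1+\eps)/b(n)) \to 0$, $\sum_j b_{jn}^\alpha \to 1$, and the subtracted term $K_0^2 x_n^{-\alpha}(1+\eps)^{-2\alpha}(\sum_j b_{jn}^\alpha)^2 \to 0$, so that $\liminf_n x_n^\alpha P\{|S_n| > x_n\} \geq c_0/(1+\eps)^\alpha$; letting $\eps \downarrow 0$ gives $\liminf_n x_n^\alpha P\{|S_n| > x_n\} \geq c_0$. Combining the two bounds proves $\lim_n x_n^\alpha P\{|S_n| > x_n\} = c_0$.
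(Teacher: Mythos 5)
Your proposal is correct and is essentially the proof the paper intends: the paper gives no separate argument for the corollary, presenting it as "a simple consequence of Lemma \ref{lemma4.1} and Remark \ref{remark1}," and your sandwich between \eqref{lowerbound} and \eqref{upperbound} at $x=x_n$, letting $n\to+\infty$ and then $\eps\downarrow 0$, is exactly that. The one step the paper leaves implicit, namely $\Delta_{b^{(1:n)}}^{(n)}(\eps x_n)\to 1$, you handle correctly and self-containedly by extracting the uniform tail bound $P\{|S_n|>x\}\leq C x^{-\alpha}$ ($x\geq 1$) from \eqref{upperbound} itself (legitimate, since that inequality holds for all $n$, $x>0$, and $\sup_n\sum_j b_{jn}^\alpha<+\infty$), which parallels how the paper obtains tightness for the analogous quantity $\Delta_t$ in Lemma \ref{lemma6}, there deduced instead from the already-established convergence in distribution.
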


\section{Proofs}\label{sec:proof}

\subsection{Preliminary results}

Let $\a$ be a given positive real number such that
$\E[ L^\a+R^\a]<+\infty$.
For every integer number $n \geq 1$ set
\begin{align}
  \label{eq.them}
  M_n{(\a)} := \sum_{j=1}^n \beta_{j,n}^\a \qquad \text{and} \qquad \tilde M_n (\a):= \frac{M_n{(\a)}}{m_n(\a)}
\end{align}
where
\[
m_n(\a):= \frac{\Gamma(n+\CS(\a))}{\Gamma(n)\Gamma(\CS(\a)+1)}.
\]
Note that, as $n \to +\infty$, by the well-known asymptotic expansion for the ratio of Gamma functions, 
\begin{equation}\label{asintm}
m_n(\a)=n^{\QQ(\a)} \frac{1}{\Gamma(\QQ(\a)+1)} 
\Big(1+O\Big(\frac{1}{n}\Big)\Big).
\end{equation}
For every $\a>0$, set also
\[
 \b_{(n)}:=\max_{1\leq j \leq n} \beta_{j,n} \qquad \text{and} \qquad \tilde \b_{(n)}:= \frac{\b(n)  }{m_n(\a)^{\frac{1}{\a}}},
  \]
and recall that
$\qq(\a)=\QQ(\a)/\a$.
Let us collect some results related to the sequence $(\tilde M_n(\a))_{n\geq 1}$ proved in \cite{BaLa}.  

\begin{proposition}[\cite{BaLa}]\label{Lemma2BL}
Let $\a>0$ such that $\E[ L^\a+R^\a] <+\infty$.
\begin{itemize}
 \item[(i)] For every $n\geq 1$
\[
\E[M_n(\a)]= m_n(\a).
\]
\item[(ii)] $\tilde M_n(\a)$ is a positive martingale with respect to the filtration $(\CG_n)_{n \geq 1}$ with
\[
\CG_n=\s(L_1,R_1,\dots,L_{n-1},R_{n-1},I_1,\dots,I_{n-1}),
\]
and $\E[\tilde M_n(\a)]=1$. Hence, $\tilde M_n(\a)$ converges almost surely to
a random variable $\tilde M_\infty(\a)$ with $\E[\tilde M_\infty(\a)] \leq 1$.
\item[(iii)]  If for some $\delta>0$ and $\a>0$ one has
    $\qq(\delta) < \qq(\a)<+\infty$, then 
    $\tilde \b_{(n)}$  converges in probability to $0$.
\item[(iv)] If  $\qq(\delta) < \qq(\a)<+\infty$ for $\a < \delta$, $\tilde M_n(\a)$ converges in $L^1$ to $\tilde M_\infty(\a)$
and $\E [\tilde M_\infty(\a)]=1$. 
\end{itemize}
\end{proposition}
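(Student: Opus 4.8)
The plan is to make everything rest on a single one-step identity for $M_n(\a)=\sum_{j=1}^n\beta_{j,n}^\a$. From the splitting rule \eqref{recursion}, in passing from level $n$ to level $n+1$ only the weight carrying the (uniform, $\CG_n$-independent) index $I_n$ is replaced, so
\[
M_{n+1}(\a)=M_n(\a)+\beta_{I_n,n}^\a\bigl(L_n^\a+R_n^\a-1\bigr).
\]
Each $\beta_{j,n}$ is $\CG_n$-measurable while $(L_n,R_n)$ and $I_n$ are independent of $\CG_n$, so $\E[\beta_{I_n,n}^\a\mid\CG_n]=n^{-1}M_n(\a)$ and hence $\E[M_{n+1}(\a)\mid\CG_n]=M_n(\a)\,(n+\CS(\a))/n$. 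Taking expectations and iterating from $M_1(\a)=1$ gives $\E[M_n(\a)]=\prod_{k=1}^{n-1}(k+\CS(\a))/k=m_n(\a)$, which is (i); dividing by $m_n(\a)$ and using $m_{n+1}(\a)/m_n(\a)=(n+\CS(\a))/n$ shows $\E[\tilde M_{n+1}(\a)\mid\CG_n]=\tilde M_n(\a)$, so $\tilde M_n(\a)$ is a non-negative $(\CG_n)$-martingale with mean $1$. Its a.s.\ convergence to some $\tilde M_\infty(\a)$ and the bound $\E[\tilde M_\infty(\a)]\le1$ then follow from the martingale convergence theorem and Fatou's lemma, completing (ii).

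For (iii) I would use the elementary domination $\b_{(n)}^\delta=\max_j\beta_{j,n}^\delta\le\sum_j\beta_{j,n}^\delta=M_n(\delta)$, valid for every $\delta>0$, so that $\b_{(n)}\le M_n(\delta)^{1/\delta}$. Then for any $\eps>0$, Markov's inequality together with part (i) gives
\[
P\{\tilde\b_{(n)}>\eps\}=P\{M_n(\delta)>\eps^\delta m_n(\a)^{\delta/\a}\}\le\frac{m_n(\delta)}{\eps^\delta\,m_n(\a)^{\delta/\a}}.
\]
By the asymptotics \eqref{asintm} the right-hand side is of order $n^{\CS(\delta)-(\delta/\a)\CS(\a)}=n^{\delta(\qq(\delta)-\qq(\a))}$, which tends to $0$ precisely because $\qq(\delta)<\qq(\a)$; note $m_n(\delta)$ is finite since $\CS(\delta)=\delta\qq(\delta)<+\infty$. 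This yields convergence in probability for every $\delta>0$ satisfying the hypothesis.

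The real work is (iv), where I would obtain $L^1$-convergence by exhibiting $L^p$-boundedness of the martingale for a suitable $p\in(1,2]$, which forces uniform integrability. A convexity remark fixes the exponent: since $\CS$ is convex and $\qq(\delta)<\qq(\a)$ with $\delta>\a$, a chord estimate shows $\qq(\delta')<\qq(\a)$ for every $\delta'\in(\a,\delta]$, so I may take $\delta':=\min\{\delta,2\a\}$ and $p:=\delta'/\a\in(1,2]$, still with $\qq(\delta')<\qq(\a)$ and $\CS(\delta')<+\infty$. Writing $D_n:=\beta_{I_n,n}^\a(L_n^\a+R_n^\a-1)$ and applying the elementary inequality $(x+d)^p\le x^p+p\,x^{p-1}d+c_p|d|^p$ (valid for $x\ge0$, $x+d\ge0$, $p\in(1,2]$) with $x=M_n(\a)$, $d=D_n$, then conditioning as above, I obtain
\[
\E[M_{n+1}(\a)^p\mid\CG_n]\le M_n(\a)^p\Bigl(1+\tfrac{p\,\CS(\a)}{n}\Bigr)+\frac{C_p}{n}\,M_n(\delta'),
\]
using $\E[|D_n|^p\mid\CG_n]=\E[|L^\a+R^\a-1|^p]\,n^{-1}M_n(\delta')$, where $\E[|L^\a+R^\a-1|^p]<+\infty$ because $(L^\a+R^\a)^p\le 2^{p-1}(L^{\delta'}+R^{\delta'})$. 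With $a_n:=\E[M_n(\a)^p]$ and part (i) for the forcing term this gives the scalar recursion $a_{n+1}\le a_n(1+p\CS(\a)/n)+C_p\,m_n(\delta')/n$.

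The main obstacle is to close this recursion, i.e.\ to show $a_n=O(m_n(\a)^p)$. I would set $b_n:=\E[\tilde M_n(\a)^p]=a_n/m_n(\a)^p$; since $(1+p\CS(\a)/n)(1+\CS(\a)/n)^{-p}=1+O(n^{-2})$, dividing by $m_{n+1}(\a)^p$ turns the recursion into $b_{n+1}\le b_n(1+O(n^{-2}))+g_n$ with $g_n\asymp m_n(\delta')\,n^{-1}m_n(\a)^{-p}\asymp n^{\CS(\delta')-p\CS(\a)-1}$ by \eqref{asintm}. As $p\CS(\a)=\delta'\qq(\a)$ and $\CS(\delta')=\delta'\qq(\delta')$, the series $\sum_n g_n$ converges exactly when $\qq(\delta')<\qq(\a)$, and a discrete Gronwall argument then yields $\sup_n b_n<+\infty$. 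Hence $(\tilde M_n(\a))$ is bounded in $L^p$, therefore uniformly integrable, so $\tilde M_n(\a)\to\tilde M_\infty(\a)$ in $L^1$ and $\E[\tilde M_\infty(\a)]=\lim_n\E[\tilde M_n(\a)]=1$. I expect the only delicate points to be the justification of the $p$-th power inequality for $p\in(1,2)$—needed precisely because $\CS(2\a)$ may be infinite when $\delta<2\a$, so a plain second-moment computation is unavailable—and the elementary but fiddly summability bookkeeping just indicated.
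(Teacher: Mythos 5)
Your proof is correct. One caveat before comparing: this paper does not actually prove Proposition \ref{Lemma2BL} --- it is quoted from \cite{BaLa} --- so there is no in-paper proof to check you against line by line. Against what the paper does contain, your argument matches its toolkit exactly: the one-step identity $M_{n+1}(\a)=M_n(\a)+\beta_{I_n,n}^\a(L_n^\a+R_n^\a-1)$ together with $\E[\beta_{I_n,n}^\a\mid\CG_n]=M_n(\a)/n$ is precisely the decomposition the paper itself uses in Lemma \ref{Lemma5BL} to bound $\E[\tilde M_n(\a)^2]$, and your parts (i)--(iii) are the standard short computations (Markov plus the Gamma-ratio asymptotics \eqref{asintm} for (iii)). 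Part (iv) is where you add genuine content: since only $\delta>\a$ is assumed, $\CS(2\a)$ may be infinite and a plain second-moment computation in the style of Lemma \ref{Lemma5BL} is unavailable; your reduction via the chord estimate (the same convexity fact the paper invokes in the proofs of Theorem \ref{maxlimittheorem} and Theorem \ref{main.thm}) to $\delta'=\min\{\delta,2\a\}$ and $p=\delta'/\a\in(1,2]$, followed by the increment computation with the von Bahr--Esseen-type inequality $(x+d)^p\le x^p+px^{p-1}d+c_p|d|^p$ and a discrete Gronwall bound, is the classical Biggins-type uniform-integrability argument for branching-type martingales, and is in the spirit of what \cite{BaLa} does. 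Two trivial repairs: in (iii) your displayed equality should be the inclusion inequality $P\{\tilde\b_{(n)}>\eps\}\le P\{M_n(\delta)>\eps^\delta m_n(\a)^{\delta/\a}\}$, since $\b_{(n)}^\delta\le M_n(\delta)$ only gives an event inclusion; and in (iv), when $\CS(\a)<0$ the factor $1+p\CS(\a)/n$ can be negative for small $n$ (note $\CS(\a)>-1$ follows from the standing assumption $P\{L>0\}+P\{R>0\}>1$, which also makes $m_n(\a)>0$), so the telescoping/Gronwall step should start from a fixed $n_0>-p\CS(\a)$; finitely many initial terms do not affect $\sup_n\E[\tilde M_n(\a)^p]<+\infty$, so nothing is lost.
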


Let us define, for every $t \geq 0$,
\[
 Y_t:=m_{\nu_t}(\a) e^{-\CS(\a) t}.
\]

\begin{proposition}\label{Lemma7new}
Let  $\qq(\delta) < \qq(\a)<+\infty$ for $\a < \delta$ and let $\tilde M_\infty(\a)$ be the same random variable of {\rm Proposition
\ref{Lemma2BL}}. 
Then, there exists a random variable $\CE$ with exponential distribution of parameter $1$, with $\CE$ and $\tilde M_\infty(\alpha)$ independent, such that

\begin{equation}\label{convyule}
Y_t \to \frac{\CE^{\CS(\a)}}{\Gamma(\CS(\a)+1)} \qquad\qquad \qquad \qquad\text{a.s.}, 
\end{equation}
and
\begin{equation}\label{convL1}
e^{-\CS(\a)t} M_{\nu_t}(\a) \to \frac{\CE^{\CS(\a)} \tilde M_\infty(\a)}{\Gamma(\CS(\a)+1)}=:Z_\infty(\a)  \qquad \text{a.s. and in $L^1$} 
\end{equation}
 as $t \to +\infty$. Moreover, for every $t$,
\begin{equation}\label{media1}
\E[e^{-\CS(\a)t} M_{\nu_t}(\a)]=\E [Z_\infty(\a)]=1,
\end{equation}
the law of $Z_\infty(\a)$ satisfies the fixed point equation \eqref{eq.fix.mix} and
\begin{equation}\label{media-delta}
\E[Z_\infty(\a)^{\delta/\a}] < +\infty.
\end{equation}
 Finally, 
\begin{equation}\label{betatozero}
\tilde \beta_{(\nu_t)} \to 0 \quad \text{and} \quad \beta_{(\nu_t)} e^{-\qq(\a) t} \to 0  
\end{equation}
in probability as $t \to +\infty$. 
\end{proposition}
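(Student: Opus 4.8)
The plan is to reduce everything to two independent sources of randomness, the Yule clock $\nu_t$ and the array of weights $(L_n,R_n,I_n)$, and to exploit that $m_n(\a)$ is essentially a pure power of $n$. The starting point is the classical fact (see \cite{AthNey}) that for a rate-one Yule process $e^{-t}\nu_t$ converges almost surely to a random variable $\CE$ with $\mathrm{Exp}(1)$ distribution, while $\nu_t\to+\infty$ a.s. Since the Yule process is independent of the weight array, $\CE$ is independent of the weight $\sigma$-field and hence of $\tilde M_\infty(\a)$. Substituting the Gamma-ratio expansion \eqref{asintm} into $Y_t=m_{\nu_t}(\a)e^{-\CS(\a)t}$ gives
\[
Y_t=\frac{(e^{-t}\nu_t)^{\CS(\a)}}{\Gamma(\CS(\a)+1)}\Bigl(1+O(1/\nu_t)\Bigr),
\]
and since $e^{-t}\nu_t\to\CE$ and $\nu_t\to\infty$ a.s. this yields \eqref{convyule}. (Here $\CS(\a)>-1$ because the standing assumption $P\{L>0\}+P\{R>0\}>1$ rules out $L=R=0$ a.s.)

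For \eqref{convL1} I would write $e^{-\CS(\a)t}M_{\nu_t}(\a)=Y_t\,\tilde M_{\nu_t}(\a)$. Because $\tilde M_n(\a)\to\tilde M_\infty(\a)$ a.s. by Proposition \ref{Lemma2BL}(ii) and $\nu_t\uparrow\infty$ a.s., evaluating along the random index gives $\tilde M_{\nu_t}(\a)\to\tilde M_\infty(\a)$ a.s.; together with \eqref{convyule} this produces the a.s.\ limit $Z_\infty(\a)$. To upgrade to $L^1$ I would first compute the expectation exactly: using $\E[M_n(\a)]=m_n(\a)$ from Proposition \ref{Lemma2BL}(i), the law of $\nu_t$, and the identity $\sum_{n\geq1}\frac{\Gamma(n+s)}{\Gamma(n)}z^{n-1}=\Gamma(s+1)(1-z)^{-(s+1)}$ with $z=1-e^{-t}$, one obtains $\E[e^{-\CS(\a)t}M_{\nu_t}(\a)]=1$ for every $t$. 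On the other hand, by independence of $\CE$ and $\tilde M_\infty(\a)$, together with $\E[\CE^{\CS(\a)}]=\Gamma(\CS(\a)+1)$ and $\E[\tilde M_\infty(\a)]=1$ (Proposition \ref{Lemma2BL}(iv)), one finds $\E[Z_\infty(\a)]=1$. As the prelimit variables are nonnegative, converge a.s.\ to $Z_\infty(\a)$, and have constant expectation equal to $\E[Z_\infty(\a)]$, Scheff\'e's lemma delivers the $L^1$ convergence and \eqref{media1} at once.

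The fixed point equation \eqref{eq.fix.mix} I would read off the branching structure. Let $T\sim\mathrm{Exp}(1)$ be the first split time; after $T$ the single particle splits with weights $(L_1,R_1)$, and the two subtrees evolve as independent copies of the whole system run for time $t-T$. By \eqref{recursion} this gives, for $t>T$,
\[
M_{\nu_t}(\a)=L_1^\a M^{(1)}_{\nu^{(1)}_{t-T}}(\a)+R_1^\a M^{(2)}_{\nu^{(2)}_{t-T}}(\a),
\]
the two terms being conditionally independent given $(T,L_1,R_1)$. Multiplying by $e^{-\CS(\a)t}=e^{-\CS(\a)T}e^{-\CS(\a)(t-T)}$ and letting $t\to\infty$ along the a.s.\ convergence just established yields $Z_\infty(\a)\stackrel{\CL}{=}e^{-\CS(\a)T}(L^\a Z_1+R^\a Z_2)$ with $Z_1,Z_2$ i.i.d.\ copies of $Z_\infty(\a)$ independent of $(T,L,R)$. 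Since $T\sim\mathrm{Exp}(1)$ makes $e^{-T}$ uniform on $(0,1)$, we have $e^{-\CS(\a)T}=\Theta^{\CS(\a)}$ with $\Theta\sim\mathrm{Unif}(0,1)$, which is precisely \eqref{eq.fix.mix}.

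For the moment bound \eqref{media-delta} I would use the independent representation $Z_\infty(\a)=\CE^{\CS(\a)}\tilde M_\infty(\a)/\Gamma(\CS(\a)+1)$: the $\delta/\a$ moment factorizes, up to the constant, into $\E[\CE^{\CS(\a)\delta/\a}]\,\E[\tilde M_\infty(\a)^{\delta/\a}]$, the exponential factor being a finite Gamma value and the martingale factor being finite precisely because $\qq(\delta)<\qq(\a)$ is the standard critical-moment condition guaranteeing $\E[\tilde M_\infty(\a)^{\delta/\a}]<\infty$ for this smoothing transform (cf.\ \cite{BaLa}); alternatively one bounds $\E[(e^{-\CS(\a)t}M_{\nu_t}(\a))^{\delta/\a}]$ uniformly in $t$ and passes to the limit by Fatou. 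Finally \eqref{betatozero} follows from Proposition \ref{Lemma2BL}(iii): since $\tilde\beta_{(n)}\to0$ in probability and $\nu_t\to\infty$ independently, conditioning on $\nu_t$ (split the sum at a large truncation level $N$ and use $P\{\nu_t<N\}\to0$) gives $\tilde\beta_{(\nu_t)}\to0$ in probability, and writing $\beta_{(\nu_t)}e^{-\qq(\a)t}=\tilde\beta_{(\nu_t)}\,Y_t^{1/\a}$ with $Y_t^{1/\a}$ convergent a.s.\ (hence bounded in probability) yields the second limit. The main obstacle is the $L^1$ and moment step: making the Scheff\'e argument and the uniform $\delta/\a$-moment bound rigorous, where the precise role of the hypothesis $\qq(\delta)<\qq(\a)$ in controlling $\E[\tilde M_\infty(\a)^{\delta/\a}]$ is essential.
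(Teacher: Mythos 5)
Your proposal is correct, and for \eqref{convyule}, \eqref{convL1}, \eqref{media1} and \eqref{betatozero} it follows essentially the same route as the paper: a.s.\ convergence of $e^{-t}\nu_t$ to $\CE$ combined with the expansion \eqref{asintm}, the factorization $e^{-\CS(\a)t}M_{\nu_t}(\a)=Y_t\,\tilde M_{\nu_t}(\a)$ together with Proposition \ref{Lemma2BL}(iv) along the random index, independence of $\CE$ and $\tilde M_\infty(\a)$ through the two independent $\sigma$-fields, the exact computation $\E[e^{-\CS(\a)t}M_{\nu_t}(\a)]=1$ via the Gamma series \eqref{serie-gamma}, and Scheff\'e's lemma to upgrade to $L^1$ (the paper phrases this last step as ``non-negativity plus constant expectation implies $L^1$ convergence'', which is exactly Scheff\'e). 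Where you genuinely diverge is on the fixed-point property and on \eqref{media-delta}: the paper disposes of both by citing Propositions 5.3 and 2.1 of \cite{BaLa}, whereas you rederive \eqref{eq.fix.mix} directly from the branching self-similarity of the Yule tree --- first split time $T\sim\mathrm{Exp}(1)$, decomposition $M_{\nu_t}(\a)=L_1^\a M^{(1)}_{\nu^{(1)}_{t-T}}(\a)+R_1^\a M^{(2)}_{\nu^{(2)}_{t-T}}(\a)$ for $t>T$, and $e^{-T}=\Theta$ uniform on $(0,1)$ --- which is sound (the weight recursion \eqref{recursion} with $I_n$ uniform is precisely consistent with the Yule branching property) and has the merit of being self-contained, at the price of length; and you obtain \eqref{media-delta} by factorizing $\E[Z_\infty(\a)^{\delta/\a}]$ through the independence of $\CE$ and $\tilde M_\infty(\a)$. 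Note, however, that on this last point your route is not fully independent of the paper's: the finiteness of $\E[\tilde M_\infty(\a)^{\delta/\a}]$ under $\qq(\delta)<\qq(\a)$ is exactly the content of the result in \cite{BaLa} that the paper cites, so both arguments ultimately lean on the same external moment bound. One check you should make explicit: your ``finite Gamma value'' $\E[\CE^{\CS(\a)\delta/\a}]=\Gamma(\CS(\a)\delta/\a+1)$ is finite only when $\CS(\a)\delta/\a>-1$; this does follow from the hypotheses, since $\QQ(\delta)=\E[L^\delta+R^\delta]-1\geq-1$ gives $\qq(\a)>\qq(\delta)\geq-1/\delta$, hence $\CS(\a)>-\a/\delta$ and $\CS(\a)\delta/\a>-1$, but as written your proposal asserts it without justification.
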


\begin{proof}
 It is well-known that if $(\nu_t)_{t}$ is a Yule process, then  $e^{-t}\nu_t$ is a martingale and converges a.s. to an exponential random variable
$\CE$ of parameter $1$, see e.g.  \cite{AthNey}. Hence, by \eqref{asintm}, 
$Y_t=e^{-\CS(\a)t} m_{\nu_t}(\alpha)$ converges a.s. to $\CE^{\CS(\a)}/\Gamma(\CS(\a)+1)$.
By  (iv) of Proposition \ref{Lemma2BL}, it follows that
$\tilde M_{\nu_t}(\a)$ converges a.s. and in $L^1$ to $\tilde M_\infty(\a)$.
Note that $\tilde M_{\infty}(\alpha)$ is measurable with respect to the $\s$-field generated by the $\b_{jn}$'s and
$\CE$ is measurable with respect to the $\s$-field generated by $(\nu_t)_{t}$. This implies that  $\CE$ and $\tilde M_\infty(\alpha)$
are independent. 
Since
$e^{-\CS(\a)t} M_{\nu_t}(\a)=Y_t \tilde M_{\nu_t}(\a)$,
it follows that $e^{-\CS(\a)t} M_{\nu_t}(\a)$ converges a.s. to $ \CE^{\CS(\a)} \tilde M_\infty(\a)/\Gamma(\CS(\a)+1)$.
Moreover, recalling that
for every $\gamma>-1$ and $0<u<1$
\begin{equation}\label{serie-gamma}
\sum_{n=1}^{+\infty}\frac{\Gamma (\gamma+n)}{\Gamma (n)\Gamma (\gamma+1)}(1-u)^{n-1}=u^{-(\gamma+1)}
\end{equation}
and in view of (i) of Proposition \ref{Lemma2BL}
\[
\begin{split}
 \E[e^{-\CS(\a)t} M_{\nu_t}(\a)]=& e^{-\CS(\a)t}\sum_{n=1}^{+\infty}e^{-t}(1-e^{-t})^{n-1}m_n(\a) \\
=&e^{-(\CS(\a)+1)t}\sum_{n=1}^{+\infty}(1-e^{-t})^{n-1}\frac{\Gamma (\CS(\a)+n)}{\Gamma (n)\Gamma (\CS(\a)+1)}=1
\end{split}
\]
for every $t$. By the independence of $\CE$ and $\tilde M_\infty(\a)$ and by (iv) of Proposition \ref{Lemma2BL} one easily see that
\[
\E[Z_\infty(\a)]=\E \Big[\tilde M_\infty(\a) \frac{\CE^{\CS(\a)} }{\Gamma(\CS(\a)+1)}\Big]=
\E[\tilde M_\infty(\a)]\E\Big [ \frac{\CE^{\CS(\a)}}{\Gamma(\CS(\a)+1)}\Big]=1.
\]
Now using \eqref{media1} and the fact that $e^{-\CS(\a)t} M_{\nu_t}(\a)$ is non-negative, it follows that the convergence of $e^{-\CS(\a)t} M_{\nu_t}(\a)$ holds in $L^1$ too.  
In view of Propositions 5.3 and 2.1 in \cite{BaLa} the law of $Z_\infty(\a)$ is a solution of  the fixed point equation \eqref{eq.fix.mix} and \eqref{media-delta} holds.

The proof of \eqref{betatozero} follows immediately from {(iii)} of Proposition \ref{Lemma2BL} and \eqref{convyule}.
\end{proof}
Denote by 
 $\CB$  the $\sigma$--field generated by the array of random variables 
$[\b_{jn}, j=1,\dots,n; n \geq 1]$. Given $\eps>0$ and $x_t \to +\infty$ as $t \to +\infty$, define the stochastic process
\[
 \Delta_t :=\sum_{n \geq 1} \J\{\nu_t=n\} P\Big \{ \big|\sum_{j=1}^n \beta_{jn} X_j\big |+\beta(n)|X_1| \leq \eps x_t e^{\qq(\a)t} 
 \Big | \CB \Big  \}.
\]
 
\begin{lemma}\label{lemma6} 
Let the same hypotheses of {\rm Theorem \ref{thm2}} or {\rm Theorem \ref{thm1}} be in force
for some $\a$ in $(0,2)$.
Then $\Delta_t \to 1$ in $L^1$ as $t \to +\infty$.
\end{lemma}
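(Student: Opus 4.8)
The plan is to exploit that $\Delta_t$ takes values in $[0,1]$, so that convergence in $L^1$ to the constant $1$ is equivalent to convergence of the mean. Indeed, since $0 \leq \Delta_t \leq 1$, one has $\E[|1-\Delta_t|] = 1 - \E[\Delta_t]$, and it therefore suffices to prove that $\E[\Delta_t] \to 1$ as $t \to +\infty$.

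To compute $\E[\Delta_t]$ I would use that the Yule process $(\nu_t)_t$ is independent of the array of weights, hence of $\CB$, while the conditional probability appearing in the definition of $\Delta_t$ is $\CB$-measurable. Writing $g_n$ for this $\CB$-measurable factor and taking expectations, the indicators $\J\{\nu_t=n\}$ decouple, giving
\[
\E[\Delta_t] = \sum_{n\geq 1} P\{\nu_t=n\}\, \E[g_n] = \sum_{n\geq 1} P\{\nu_t=n\}\, P\Big\{\big|\sum_{j=1}^n \beta_{jn}X_j\big| + \beta(n)|X_1| \leq \eps x_t e^{\qq(\a)t}\Big\}.
\]
Here each $\E[g_n]=P\{\,\cdot\,\}$ is the unconditional probability obtained by integrating out the $X_j$'s as well. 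Since $(\nu_t)_t$ is independent of both the weights and the $X_j$'s, the right-hand side is exactly the unconditional probability
\[
\E[\Delta_t] = P\big\{\,|V_t| + \beta_{(\nu_t)}|X_1| \leq \eps x_t e^{\qq(\a)t}\,\big\},
\]
so the claim reduces to showing that the complementary probability tends to $0$.

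Dividing the event by $e^{\qq(\a)t}$, I would set $Y_t := e^{-\qq(\a)t}\big(|V_t| + \beta_{(\nu_t)}|X_1|\big)$ and prove that $(Y_t)_t$ is bounded in probability. For the first term, Theorem \ref{thm2} (or Theorem \ref{thm1} when $\a=1$) gives that $e^{-\qq(\a)t}V_t$ converges in distribution to $V_\infty$, and weak convergence on $\RE$ entails tightness of $e^{-\qq(\a)t}|V_t|$. For the second term, \eqref{betatozero} gives $\beta_{(\nu_t)}e^{-\qq(\a)t}\to 0$ in probability, so multiplying by the fixed, almost surely finite random variable $|X_1|$ yields $e^{-\qq(\a)t}\beta_{(\nu_t)}|X_1|\to 0$ in probability, which is in particular tight. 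A sum of two tight nonnegative sequences is tight, so $(Y_t)_t$ is bounded in probability.

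Finally, I would invoke the elementary fact that if $(Y_t)_t$ is bounded in probability and $a_t\to +\infty$, then $P\{Y_t>a_t\}\to 0$: given $\delta>0$, tightness furnishes $M$ with $\sup_t P\{Y_t>M\}<\delta$, and for $t$ large one has $\eps x_t > M$, whence $P\{Y_t>\eps x_t\}\leq P\{Y_t>M\}<\delta$. Applying this with $a_t=\eps x_t\to+\infty$ gives $P\{Y_t>\eps x_t\}\to 0$, i.e. $\E[\Delta_t]\to 1$, which completes the proof. The only mildly delicate points are the decoupling in the second step, which rests on the independence of $(\nu_t)_t$ from $(\CB,(X_j)_j)$, and the treatment of the extra term $\beta_{(\nu_t)}|X_1|$; the latter is harmless precisely because of \eqref{betatozero}, since no quantitative control is needed here, only tightness tested against a threshold diverging to $+\infty$.
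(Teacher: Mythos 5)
Your proposal is correct and follows essentially the same route as the paper: reduce to $\E[\Delta_t]\to 1$ via $0\leq\Delta_t\leq 1$, identify $\E[\Delta_t]$ with $P\{e^{-\qq(\a)t}(|V_t|+\beta_{(\nu_t)}|X_1|)\leq \eps x_t\}$, and conclude by tightness (from Theorems \ref{thm2}--\ref{thm1} and \eqref{betatozero}) against the diverging threshold $\eps x_t$. Your explicit decoupling step merely spells out the identity the paper states in one line, so there is nothing substantively different to compare.
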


\begin{proof}
 Note that $0 \leq \Delta_t \leq 1$, hence
\[
 0 \leq \E[|\Delta_t-1|]=1-\E[\Delta_t].
\]
Furthermore
\[
 \E[\Delta_t]=P\{e^{-\qq(\a)t}(|V_t|+\beta(\nu_t)|X_1|)\leq \eps x_t\}.
\]
From Theorems \ref{thm2}-\ref{thm1} one knows that $e^{-\qq(\a)t} V_t$ converges in distribution. 
Moreover, from \eqref{betatozero}, one gets that $e^{-\qq(\a)t}\b(\nu_t)|X_1|$ converges in probability to zero.
Hence, $\big (e^{-\qq(\a)t}(|V_t|+\beta(\nu_t)|X_1|)\big )_{t \geq 0}$ is a tight family. This means that, for every sequence
$t_n \to +\infty$ and for every $\eta>0$, there exists $K$ such that 
$\inf_{n} P\{ e^{-\qq(\a)t_n}(|V_{t_n}|+\beta(\nu_{t_n})|X_1|)\leq K\} \geq 1-\eta$. Since $x_{t_n} \to +\infty$, 
for sufficiently large $n$ one can write
\[
 1\geq \E[\Delta_{t_n}] \geq P\{ e^{-\qq(\a)t_n}(|V_{t_n}|+\beta(\nu_{t_n})|X_1|)\leq  K\} \geq  1-\eta.
\]
Hence $\E[\Delta_t] \to 1$ and $\Delta_t \to 1$ in $L^1$.

\end{proof}

\begin{lemma}\label{Lemma5BL} 
If
$\CS(\a)<+\infty$,
one has
\begin{equation}\label{m2}
 \E[\tilde M_n(\a)^2]\leq C \sum_{i=1}^{n} i^{2\a(\qq(2\a)-\qq(\a))-1}
\end{equation}
for every $n$, $C$ being a suitable constant.
\end{lemma}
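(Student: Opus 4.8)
The plan is to turn the branching construction \eqref{recursion} into a scalar recursion for the second moment $\E[M_n(\a)^2]$, and then solve that recursion with the help of the Gamma-ratio asymptotics \eqref{asintm}.

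First I would read off from \eqref{recursion} that passing from level $n$ to level $n+1$ only replaces the single weight $\b_{I_n,n}$ by $L_n\b_{I_n,n}$ and $R_n\b_{I_n,n}$, leaving the other weights untouched; hence
\begin{equation*}
M_{n+1}(\a)=M_n(\a)+\b_{I_n,n}^{\a}\,(L_n^{\a}+R_n^{\a}-1)
\end{equation*}
for every $n\ge1$ (the case $n=1$ being consistent, since then $I_1=1$ and $\b_{1,1}=1$). Squaring and taking the conditional expectation given $\CG_n$, I would use that $I_n$ and $(L_n,R_n)$ are independent of $\CG_n$ and of each other, together with the averaging identity $\E[\b_{I_n,n}^{s}\mid\CG_n]=\tfrac1n\sum_{j=1}^{n}\b_{j,n}^{s}=M_n(s)/n$ (valid because each $\b_{j,n}$ is $\CG_n$-measurable and $I_n$ is uniform on $\{1,\dots,n\}$). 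This yields
\begin{equation*}
\E[M_{n+1}(\a)^2\mid\CG_n]=\Big(1+\frac{2\CS(\a)}{n}\Big)M_n(\a)^2+\frac{\s^2}{n}\,M_n(2\a),
\end{equation*}
where $\s^2:=\E[(L^{\a}+R^{\a}-1)^2]$. Taking expectations and applying part (i) of Proposition \ref{Lemma2BL} at the exponent $2\a$, so that $\E[M_n(2\a)]=m_n(2\a)$, I arrive at the closed recursion
\begin{equation*}
\E[M_{n+1}(\a)^2]=\Big(1+\frac{2\CS(\a)}{n}\Big)\E[M_n(\a)^2]+\frac{\s^2}{n}\,m_n(2\a).
\end{equation*}

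The hypothesis $\CS(2\a)<+\infty$ is exactly what legitimizes this step, and I expect it to be the only genuine obstacle: it guarantees $\E[L^{2\a}+R^{2\a}]<\infty$, hence (via $L^{\a}R^{\a}\le\tfrac12(L^{2\a}+R^{2\a})$) that the constant $\s^2$ is finite, and it is precisely the condition needed to invoke Proposition \ref{Lemma2BL}(i) at $2\a$. Everything afterwards is bookkeeping.

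To conclude I would set $u_n:=\E[\tilde M_n(\a)^2]=\E[M_n(\a)^2]/m_n(\a)^2$ and divide the recursion by $m_{n+1}(\a)^2$. Using the exact ratio $m_{n+1}(\a)/m_n(\a)=1+\CS(\a)/n$, this becomes
\begin{equation*}
u_{n+1}=\frac{1+2\CS(\a)/n}{(1+\CS(\a)/n)^2}\,u_n+\frac{\s^2\,m_n(2\a)}{n\,m_{n+1}(\a)^2}.
\end{equation*}
Since $(1+\CS(\a)/n)^2\ge 1+2\CS(\a)/n$ and $1+\CS(\a)/n>0$ for $n\ge2$ (recall $\CS(\a)\ge-1$), the coefficient of $u_n$ is at most $1$, so for $u_n\ge0$ the recursion telescopes to $u_n\le u_2+\sum_{k=2}^{n-1}\frac{\s^2 m_k(2\a)}{k\,m_{k+1}(\a)^2}$. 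Finally, \eqref{asintm} gives $m_k(2\a)\sim c\,k^{\CS(2\a)}$ and $m_{k+1}(\a)^2\sim c'\,k^{2\CS(\a)}$, so each summand is $O(k^{\CS(2\a)-2\CS(\a)-1})$; because $\CS(2\a)-2\CS(\a)-1=2\a(\qq(2\a)-\qq(\a))-1$ and $\sum_{i=1}^{n} i^{\,2\a(\qq(2\a)-\qq(\a))-1}\ge1$, absorbing the initial block $u_2$ into the constant yields $\E[\tilde M_n(\a)^2]\le C\sum_{i=1}^{n} i^{\,2\a(\qq(2\a)-\qq(\a))-1}$, as claimed.
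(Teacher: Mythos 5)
Your proof is correct, and it follows the same skeleton as the paper's -- the one-step branching identity $M_{n+1}(\a)=M_n(\a)+\b_{I_n,n}^{\a}(L_n^{\a}+R_n^{\a}-1)$, the conditional moments $\E[\b_{I_n,n}^{s}\mid\CG_n]=M_n(s)/n$, the identity $\E[M_n(2\a)]=m_n(2\a)$, and the Gamma-ratio asymptotics \eqref{asintm} reducing everything to summands of order $k^{\CS(2\a)-2\CS(\a)-1}=k^{2\a(\qq(2\a)-\qq(\a))-1}$ -- but your finishing mechanism differs in a worthwhile way. The paper works with the normalized martingale $\tilde M_n(\a)$ from the start: it bounds $\E[|\tilde M_{n+1}(\a)-\tilde M_n(\a)|^2]$ and then sums via the orthogonality of martingale increments, $\E[\tilde M_n(\a)^2]=1+\sum_{i=1}^{n-1}\E[|\tilde M_{i+1}(\a)-\tilde M_i(\a)|^2]$, whereas you derive the \emph{exact} closed recursion $\E[M_{n+1}(\a)^2]=(1+2\CS(\a)/n)\,\E[M_n(\a)^2]+\s^2 m_n(2\a)/n$ and telescope after normalization, the contraction of the coefficient coming from $(1+\CS(\a)/n)^2\ge 1+2\CS(\a)/n$ -- which is, in effect, the same cancellation that orthogonality supplies, obtained without invoking the martingale property of $\tilde M_n(\a)$. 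Your version is arguably cleaner: the paper's displayed increment contains the slip $(L_n+R_n-1)$ where $(L_n^{\a}+R_n^{\a}-1)$ is meant, and your route makes transparent exactly where second moments of the weights enter. You are also right that the operative hypothesis is $\CS(2\a)<+\infty$: the lemma's stated condition $\CS(\a)<+\infty$ is evidently a misprint (the bound is vacuous when $\qq(2\a)=+\infty$, and the companion Lemma \ref{Lemma6new} assumes $\E[L^{2\a}+R^{2\a}]<+\infty$). One pedantic point, which the paper glosses over as well: applying \eqref{asintm} at the exponent $2\a$ requires $\CS(2\a)>-1$, but this is automatic since $\CS(2\a)=-1$ would force $L=R=0$ a.s., contradicting the standing assumption $P\{L>0\}+P\{R>0\}>1$; likewise your absorption of $u_2$ into the constant uses $m_2(\a)=1+\CS(\a)>0$, guaranteed for the same reason.
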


\begin{proof}
From the definition of $m_n(\a)$ we have $m_{n+1}(\a)=m_n(\a)(1+\frac{\CS(\a)}{n})$ and
from the definition of $\tilde M_n(\a)$ we obtain
\begin{displaymath}
\tilde M_{n+1}(\a)- \tilde M_{n}(\a)=-\frac{M_{n}(\a)}{m_n(\a)}\Big (\frac{\CS(\a)}{n+\CS(\a)} \Big ) +\sum_{i=1}^{n}\J\{I_n=j\} \frac{\beta_{jn}^\a(L_n+R_n-1)}{m_{n+1}(\a)}.
\end{displaymath}
Below the symbol $C$ designates a constant, not necessarily the same at each occurrence. 
\[
\begin{split}
|\tilde M_{n+1}(\a)- \tilde M_{n}(\a)|^2 & \leq 2\left(  \tilde M_{n}(\a)^2\Big (\frac{\CS(\a)}{n+\CS(\a)} \Big )^2+  \sum_{i=1}^{n}\J\{I_n=j\} \frac{\beta_{jn}^{2\a}(L_n+R_n-1)^2}{m_{n+1}(\a)^2}\right)\\
&\leq C \left[\frac{1}{n^2}\Big(  \sum_{i=1}^{n}  \frac{\beta_{jn}^\a}{m_{n}(\a)}\Big)^2  + \sum_{i=1}^{n}\J\{I_n=j\} \frac{\beta_{jn}^{2\a}(L_n+R_n-1)^2}{m_{n+1}(\a)^2}  \right]\\
&\leq C \left[\frac{1}{n} \sum_{i=1}^{n}  \frac{\beta_{jn}^{2\a}}{m_{n}(\a)^2}  + \sum_{i=1}^{n}\J\{I_n=j\} \frac{\beta_{jn}^{2\a}(L_n+R_n-1)^2}{m_{n+1}(\a)^2}  \right].
\end{split}
\]
Taking the expectation on both side of the last inequality we get
\[
\begin{split}
	\E(|\tilde M_{n+1}(\a)- \tilde M_{n}(\a)|^2) & \leq  \frac{C}{n} \left[ \frac{m_n(2\a)}{m_n(\a)^2}+ \frac{m_n(2\a)}{m_{n+1}(\a)^2}\right]\\
	&\leq \frac{C}{n} \Big[ n^{\CS(2\a)-2 \CS(\a)}+ \frac{n^{\CS(2\a)}}{(n+1)^{2\CS(\a)}}\Big].
\end{split}
\]
Now, recalling that $(\tilde M_{n}(\a))_{n\geq 1}$ is a martingale, we obtain
\[
\begin{split}
	 \E[\tilde M_n(\a)^2]&= 1+ \sum_{i=1}^{n-1}\E(|\tilde M_{i+1}(\a)- \tilde M_{i}(\a)|^2) \\
	& \leq C \sum_{i=1}^{n} i^{2\a(\qq(2\a)-\qq(\a))-1}.
\end{split}
\]
\end{proof}
\begin{lemma}\label{Lemma6new} 
If $\E[ L^{2\a}+R^{2\a}] <+\infty$, one has for every $t \geq 1$
\[
 e^{-2\CS(\a)t} \E[M_{\nu_t}(\a)^2] \leq \tilde h(t)
\]
where
\begin{equation}\label{defh}
\tilde h(t):=
\left\{%
\begin{array}{ll}
C_\mu & \text{if $\qq(2\a)<\qq(\a)$ and $2\CS(\a)>-1$;} \\
C_\mu h(t) & \text{otherwise} \\
\end{array}
\right.
\end{equation}
where $h(t)$ is defined in \eqref{defhA} and $C_\mu$ is a suitable constant.
\end{lemma}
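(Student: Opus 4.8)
The plan is to convert the second--moment bound \eqref{m2} for the normalized martingale $\tilde M_n(\a)$ into a pointwise estimate for $\E[M_n(\a)^2]$, and then to average over the Yule process. Set $\theta:=2\a(\qq(2\a)-\qq(\a))=\CS(2\a)-2\CS(\a)$. By \eqref{m2} one has $\E[\tilde M_n(\a)^2]\le C\sum_{i=1}^n i^{\theta-1}$, and comparison with an integral gives
\[
\sum_{i=1}^n i^{\theta-1}\le
\begin{cases}
C\,n^{\theta} & \text{if } \theta>0,\\
C\,(1+\log n) & \text{if } \theta=0,\\
C & \text{if } \theta<0.
\end{cases}
\]
Since $M_n(\a)=m_n(\a)\tilde M_n(\a)$ and, by \eqref{asintm}, $m_n(\a)^2\le C\,n^{2\CS(\a)}$ for every $n\ge1$, multiplying the two bounds yields $\E[M_n(\a)^2]\le C\,n^{\CS(2\a)}$ when $\theta>0$, $\E[M_n(\a)^2]\le C\,n^{2\CS(\a)}(1+\log n)$ when $\theta=0$, and $\E[M_n(\a)^2]\le C\,n^{2\CS(\a)}$ when $\theta<0$.

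By the independence of $(\nu_t)_t$ from the weight array, $\E[M_{\nu_t}(\a)^2]=\sum_{n\ge1}P\{\nu_t=n\}\,\E[M_n(\a)^2]$, so the task reduces to estimating Yule averages of the form $\E[\nu_t^{p}]$ and $\E[\nu_t^{p}\log\nu_t]$. For $p>-1$ the identity \eqref{serie-gamma} with $u=e^{-t}$ gives $\E[\Gamma(\nu_t+p)/\Gamma(\nu_t)]=\Gamma(p+1)e^{pt}$, whence $\E[\nu_t^{p}]\le C e^{pt}$; for $p=-1$ one computes $\E[\nu_t^{-1}]=\frac{e^{-t}}{1-e^{-t}}\,t\le C t e^{-t}$; and for $p<-1$ the bound $(1-e^{-t})^{n-1}\le1$ together with $\sum_n n^{p}<\infty$ gives $\E[\nu_t^{p}]\le C e^{-t}$. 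The decisive structural point is that the standing assumption $P\{L>0\}+P\{R>0\}>1$ forces $\CS(2\a)=\E[L^{2\a}+R^{2\a}]-1>-1$, so the exponent $\CS(2\a)$ always falls in the regime $p>-1$, whereas $2\CS(\a)$ may lie anywhere in $(-2,\infty)$, which is exactly what produces the subcase split in \eqref{defhA}.

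For the logarithmic moments I would split the sum at $n=e^{t}$. On $\{n\le e^{t}\}$ one has $\log n\le t$, so that part is bounded by $t\,\E[\nu_t^{p}]$; on $\{n> e^{t}\}$ the geometric factor $(1-e^{-t})^{n-1}$ controls the slowly growing $\log n$, giving $\sum_{n>e^{t}}(1-e^{-t})^{n-1}\log n\le C\,t\,e^{t}$. Combining these, for $-1<p\le0$ (using also $n^{p}\le e^{pt}$ on the tail) I obtain the sharp bound $\E[\nu_t^{p}\log\nu_t]\le C\,t\,e^{pt}$; for $p=-1$ this gives $\E[\nu_t^{-1}\log\nu_t]\le C\,t^2 e^{-t}$; and for $p<-1$ the summable estimate gives $\E[\nu_t^{p}\log\nu_t]\le C e^{-t}$. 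When instead $p=2\CS(\a)>0$ I would use the crude $\log n\le\eta^{-1}n^{\eta}$ together with $\E[\nu_t^{p+\eta}]\le C e^{(p+\eta)t}$, which is what generates the factor $e^{\eta t}$.

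Finally I would assemble $e^{-2\CS(\a)t}\E[M_{\nu_t}(\a)^2]\le\tilde h(t)$ by matching exponents in each case of \eqref{defh}--\eqref{defhA}. When $\theta>0$, the estimate $\E[M_n(\a)^2]\le C n^{\CS(2\a)}$ with $\CS(2\a)>-1$ gives $e^{-2\CS(\a)t}\E[\nu_t^{\CS(2\a)}]\le C e^{(\CS(2\a)-2\CS(\a))t}=C h(t)$; when $\theta<0$ the bound $C n^{2\CS(\a)}$ and the three regimes $2\CS(\a)>-1$, $2\CS(\a)=-1$, $2\CS(\a)<-1$ produce respectively a constant, $t$, and $e^{-(2\CS(\a)+1)t}$; and when $\theta=0$ the extra $(1+\log n)$ combined with the logarithmic moments yields $e^{\eta t}$, $t$, $t^2$, or $t e^{-(2\CS(\a)+1)t}$ according to whether $\CS(\a)>0$, $-1<2\CS(\a)\le0$, $2\CS(\a)=-1$, or $2\CS(\a)<-1$. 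The main obstacle is precisely the sharp control of the logarithmic Yule moments: the crude bound $\log n\le\eta^{-1}n^{\eta}$ only delivers $e^{\eta t}$ and is too lossy to reach the clean powers $t$ and $t^2$ demanded in the critical regime $\theta=0$ with $2\CS(\a)\le0$, so the split at $n=e^{t}$ (equivalently, differentiation of \eqref{serie-gamma} in its parameter, which brings down a digamma factor comparable to $\log n$) is essential there.
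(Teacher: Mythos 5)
Your proof is correct, and it rests on the same three pillars as the paper's own argument: the martingale second--moment bound \eqref{m2}, the asymptotics \eqref{asintm} for $m_n(\a)$, and averaging over the Yule distribution via \eqref{serie-gamma}--\eqref{formuletta}--\eqref{logaritmo}, with the identical case analysis in $(\theta,2\CS(\a))$, $\theta=\CS(2\a)-2\CS(\a)$. The one genuine divergence is your treatment of the critical regime $\qq(2\a)=\qq(\a)$, $2\CS(\a)\le 0$: the paper keeps the double sum $e^{-t}\sum_{n\ge1}(1-e^{-t})^{n-1}n^{2\CS(\a)}\sum_{i\le n}i^{-1}$, interchanges the order of summation and factorizes it as $e^{-t}\bigl(\sum_{i\ge1}i^{-1}(1-e^{-t})^{i-1}\bigr)\bigl(\sum_{k\ge0}(1-e^{-t})^{k}(k+1)^{2\CS(\a)}\bigr)$, using $n^{2\CS(\a)}\le (n-i+1)^{2\CS(\a)}$ for $2\CS(\a)\le0$, and then handles each factor by \eqref{formuletta} or \eqref{logaritmo}; you instead collapse the inner sum into the pointwise bound $\E[M_n(\a)^2]\le C\,n^{2\CS(\a)}(1+\log n)$ and estimate the logarithmic Yule moments by splitting at $n=e^{t}$. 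Both devices are valid and deliver the same powers $t$, $t^2$, and your tail estimate $\sum_{n>e^{t}}(1-e^{-t})^{n-1}\log n\le C\,t\,e^{t}$ is correct (it also follows in one line from Jensen: $\E[\log\nu_t]\le\log\E[\nu_t]=t$, so the full sum is at most $te^t$). Two small remarks: in the subcase $\qq(2\a)=\qq(\a)$, $2\CS(\a)<-1$, your route actually yields the sharper bound $C e^{-(2\CS(\a)+1)t}$, since $\sum_n n^{2\CS(\a)}(1+\log n)<+\infty$, which is smaller than the stated $h(t)=te^{-(2\CS(\a)+1)t}$ (the paper's rearrangement loses a factor $t$ there through \eqref{logaritmo}), so the lemma's inequality holds a fortiori; and your structural observation that the standing assumption $P\{L>0\}+P\{R>0\}>1$ forces $\CS(2\a)>-1$, which legitimizes applying \eqref{formuletta} with exponent $\CS(2\a)$ when $\qq(2\a)>\qq(\a)$, is exactly the point the paper uses tacitly in its Case 4, where, incidentally, the displayed exponent $n^{2\a(\qq(2\a)-\qq(\a))}$ should read $n^{\CS(2\a)}$ as in your version.
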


\begin{proof}
As above the symbol $C$ designates a constant, not necessarily the same at each occurrence. 
We shall repeatedly use the following two simple facts:
for any $\gamma>-1$ and any $t>0$
\begin{equation}\label{formuletta} 
\sum_{n \geq 1} (1-e^{-t})^{n-1} n^\gamma \leq C e^{(\gamma+1)t}
\end{equation}
and, for every $t \geq 1$,
\begin{equation}\label{logaritmo} 
\sum_{n \geq 1} (1-e^{-t})^{n-1} \frac{1}{n}=\frac{t}{1-e^{-t}} \leq \frac{t}{1-e^{-1}}.
\end{equation}
Relation \eqref{logaritmo} follows by a simple Taylor expansion of $\log(1-x)$, while \eqref{formuletta} 
follows from \eqref{serie-gamma} and from the inequality
\[
 n^\gamma \leq C \frac{\Gamma (\gamma+n)}{\Gamma (n)\Gamma (\gamma+1)}.
\]
Since 
\[
 I_t:= \E[M_{\nu_t}(\a)^2]=
e^{-t} \sum_{n \geq 1} (1-e^{-t})^{n-1} m_n(\a)^2 \E[\tilde M_n(\a)^2],
\]
\eqref{asintm} and \eqref{m2} yield
\begin{equation}\label{statingpoint}
   I_t \leq C e^{-t} \sum_{n \geq 1} (1-e^{-t})^{n-1} n^{2\CS(\a)} \sum_{i=1}^n i^{2\a(\qq(2\a)-\qq(\a))-1}.
\end{equation}
Let $t \geq 1$. 
We need now to distinguish among different cases. 

\underline{Case 1.} If $\qq(2\a)<\qq(\a)$ and $2\CS(\a)>-1$, then $\sum_{i=1}^{+\infty} i^{2\a(\qq(2\a)-\qq(\a))-1}<+\infty$ and, 
by \eqref{formuletta}, one gets
\[
 I_t \leq C e^{-t} \sum_{n \geq 1} (1-e^{-t})^{n-1} n^{2\CS(\a)} \leq C e^{-t + t(2\CS(\a)+1)}
=Ce^{2\CS(\a)t}. 
\]

\underline{Case 2.} If $\qq(2\a)<\qq(\a)$ and $2\CS(\a)=-1$, then $\sum_{i=1}^{+\infty} i^{2\a(\qq(2\a)-\qq(\a))-1}<+\infty$ and,
by \eqref{logaritmo}, one gets
\[
 I_t \leq C e^{-t} \sum_{n \geq 1} (1-e^{-t})^{n-1} \frac{1}{n} 
\leq Ct. 
\]

\underline{Case 3.} If $\qq(2\a)<\qq(\a)$ and $2\CS(\a)<-1$, then $\sum_{i=1}^{+\infty} i^{2\a(\qq(2\a)-\qq(\a))-1}<+\infty$ and hence
\[
 I_t \leq C e^{-t} \sum_{n \geq 1} (1-e^{-t})^{n-1} n^{2\CS(\a)} \leq C e^{-t} \sum_{n \geq 1} n^{2\CS(\a)}
\leq C e^{-t}.
\]

\underline{Case 4.} If $\qq(2\a)>\qq(\a)$, noticing that $\sum_{i=1}^n i^{2\a(\qq(2\a)-\qq(\a))-1} \leq C n^{2\a(\qq(2\a)-\qq(\a))}$,
one gets
\[
  I_t \leq C e^{-t} \sum_{n \geq 1} (1-e^{-t})^{n-1} n^{2\a(\qq(2\a)-\qq(\a))},
\]
and then, by \eqref{formuletta},
\[
 I_t \leq Ce^{(\CS(2\a)-2\CS(\a))t}.
\]

\underline{Case 5.} If $\qq(2\a)=\qq(\a)$ and $0<\CS(\a)$ 
\[
\begin{split}
 I_t &\leq C  e^{-t}  \sum_{n \geq 1} (1-e^{-t})^{n-1} n^{2\CS(\a)} \log n  \\
& \leq  C  e^{-t}  \sum_{n \geq 1} (1-e^{-t})^{n-1} n^{2\CS(\a)+\eta} 
=C_\eta e^{\eta t}. \\
\end{split}
\]

 If $\qq(2\a)=\qq(\a)$ and  $2\CS(\a) \leq 0 $, then 
\[
\begin{split}
 I_t & \leq C  e^{-t} \sum_{n \geq 1} (1-e^{-t})^{n-1} n^{2\CS(\a)} \sum_{i=1}^n i^{-1} \\
     &  =  C  e^{-t}   \sum_{i \geq 1} i^{-1}  \sum_{n \geq i} (1-e^{-t})^{n-1} n^{2\CS(\a)} \\
      &  \leq   C  e^{- t}   \sum_{i \geq 1} i^{-1} (1-e^{-t})^{i-1}   \sum_{k \geq 0} (1-e^{-t})^{k} (k+1)^{2\CS(\a)}\\
\end{split}
\]
Hence:

\underline{Case 6.} If $\qq(2\a)=\qq(\a)$ and $2\CS(\a) <-1$, by \eqref{logaritmo}
\[
 I_t \leq  C  e^{- t}   \sum_{i \geq 1} i^{-1} (1-e^{-t})^{i-1}\sum_{k \geq 1} k^{2\CS(\a)} =  C  e^{- t}  
  \sum_{i \geq 1} i^{-1} (1-e^{-t})^{i-1}
 \leq C t e^{-t}  
\]

\underline{Case 7.} If $\qq(2\a)=\qq(\a)$ and $2\CS(\a) =-1$, using \eqref{logaritmo} twice 
\[
  I_t \leq C  t^2 e^{-t}. 
\]

\underline{Case 8.} If $\qq(2\a)=\qq(\a)$ and $-1<2\CS(\a) \leq 0$, by \eqref{formuletta} and \eqref{logaritmo},
\[
 I_t \leq C  e^{-t}  \sum_{i \geq 1} i^{-1}(1-e^{-t})^{i-1} \sum_{k \geq 0} (1-e^{-t})^{k} (k+1)^{2\CS(\a)}
=Ct e^{2\CS(\a)t}
\]

\end{proof}

\subsection{Proofs of the main theorems}

\begin{proof}[Proof of Theorem \ref{thm1}]
The proof follows the same steps of the one of Theorem 2.2 in \cite{BaLa}, using in place 
of Lemma 5.1  in \cite{BaLa} the following simple result:  
{\it Let $(X_{n})_{n \geq 1}$ be a sequence of iid random variables with common distribution function $F_0$. 
Assume that $(a_{jn})_{j \geq 1, n \geq 1}$ is an array of positive weights such that 
\[
 \lim_{n \to +\infty} \sum_{j=1}^n a_{jn} =a_\infty \qquad \text{and} \qquad \lim_{n \to +\infty} \max_{1 \leq j \leq n} a_{jn}=0.
\]
If $F_0$ satisfy \eqref{stabledomain} with $\a=1$, $c_0^+=c_0^->0$ and 
\eqref{gamma} holds, then $\sum_{j=1}^n a_{jn} X_j$ converges in law to a Cauchy random variable of scale parameter $\pi a_\infty c_0$ and position parameter $a_\infty \gamma_0$.}
To prove this claim, according to the classical general central limit theorem for array of independent random variables, it is enough to prove that  
  \begin{align}
    \label{condition1-uno-bis}
    &\lim_{n \to +\infty} \z_{n}(x) 
    = \frac{a_\infty c_0}{|x|} \qquad (x\not =0), \\
    \label{clt3condtris}
    &\lim_{\eps \to 0^+}\lim_{n \to +\infty} \s^{2}_{n}(\eps) = 0 , \\
    \label{condition3-bis}
    &\lim_{n \to +\infty} \eta_{n} = a_\infty \gamma_0 
  \end{align}
  are simultaneously satisfied where
  \begin{align*}
    \z_n(x) &:= 
    \J\{x<0\} \sum_{j=1}^{n} Q_{j,n}(x) + \J\{x>0\} \sum_{j=1}^{n} (1-Q_{j,n}(x)) \qquad (x \in \RE), \\
    \s_{n}^2(\eps) &:=  
    \sum_{j=1}^{n}\Big\{ \int_{(-\eps,+\eps]} x^2\,dQ_{j,n}(x)-\Big( \int_{(-\eps,+\eps]} x\,dQ_{j,n}(x)\Big)^2 \Big\} \qquad (\eps>0), \\
    \eta_{n} &: =   \sum_{j=1}^{n}\Big\{ 1- Q_{j,n}(1) -Q_{j,n}(-1) + \int_{(-1,1]} x\, dQ_{j,n}(x)\Big \}, \\
    Q_{j,n}(x)&:=F_0\big(a_{j,n}^{-1}x\big) \quad \text{with the convention  $F_0(\cdot/0):=\J_{[0,+\infty)}(\cdot)$}. \\
  \end{align*}
See, e.g., Theorem 30 and Proposition 11 in \cite{fristedgray}. Conditions \eqref{condition1-uno-bis} and \eqref{clt3condtris}
can be proved exactly as the analogous conditions of Lemma 5 in \cite{BassLadMatth10}. As for condition \eqref{condition3-bis} note that
\[
\eta_{n}=\sum_{j=1}^n a_{jn} \int_{(-1/a_{jn},1/a_{jn}]} x dF_0(x)+
\sum_{j=1}^n a_{jn}\left[\Big(1-F_0\Big(\frac{1}{a_{jn}}\Big)\Big)\frac{1}{a_{jn}}-F\Big(-\frac{1}{a_{jn}}\Big)\frac{1}{a_{jn}}\right].
\]
Using the assumptions on $F_0$ and on $(a_{jn})_{jn}$ it follows immediately that
\[
\lim_n \sum_{j=1}^n a_{jn} \int_{(-1/a_{jn},1/a_{jn}]} x dF_0(x)= a_\infty \gamma_0
\]
and
\[
 \qquad \lim_n \sum_{j=1}^n a_{jn}\left[\Big(1-F_0\Big(\frac{1}{a_{jn}}\Big)\Big) \frac{1}{a_{jn}}-F\Big(-\frac{1}{a_{jn}}\Big)\frac{1}{a_{jn}}\right]=a_\infty(c^+_0-c^-_0)=0.
\]
This gives  \eqref{condition3-bis}.

\end{proof}

\begin{proof}[Sketch of the proof of Theorem \ref{thm.eqdiff}]
Using the results in \cite{Kielek} one proves that
\[
q_t:=\sum_{n \geq 1} e^{-t}(1-e^{-t})^{n-1} \tilde q_n,
\]
where $\tilde q_1:=\bar \rho_0$ and
\[
\tilde q_n:=\frac{1}{n} \sum_{i=0}^{n-1} \tilde Q^+(\tilde q_i,\tilde q_{n-1-i}) \qquad (n \geq 1),
\]
is a solution of an homogeneous kinetic equation of the form \eqref{eq.1} 
with $Q^+$  replaced by $\tilde Q^+$. At this stage, following
the same arguments used to prove Proposition 1 in \cite{BassLadMatth10}, one proves that $q_t$ is the law of $H_t$. 
\end{proof}

\begin{proof}[Proof of Theorem \ref{maxlimittheorem}]
Let $x>0$ and let $\CB^*$ the $\sigma$-field generated by the array 
of weights $[\beta_{jn}]_{j,n}$ and by the Yule process $[\nu_t]_{t \geq 0}$. Then
\begin{equation}\label{stimabasemax}
 \begin{split}
P\{e^{-\qq(\a)t} \frH_t \leq x \}&=\E\left [ \prod_{j=1}^{\nu_t} P\{  | e^{-\qq(\a)t} \beta_{j,\nu_t} X_j| \leq x|\CB^* \}\right ] \\
&=\E\left[ \prod_{j=1}^{\nu_t} \left(1-P\{  | e^{-\qq(\a)t} \beta_{j,\nu_t} X_j| > x|\CB^* \}\right)\right] \\
&=\E\left[e^{-\frac{c_0}{x^\a} e^{-\CS(\a)t}M_{\nu_t}(\a)}+\Lambda_t(x)\right] \\
 \end{split}
\end{equation}
where
\[
 \Lambda_t(x) := \prod_{j=1}^{\nu_t} (1-P\{  | e^{-\qq(\a)t} \beta_{j,\nu_t} X_j| > x|\CB^* \})-\prod_{j=1}^{\nu_t} e^{-\frac{c_0}{x^\a} e^{-\CS(\a)t} \beta_{j\nu_t}^\a }. 
\]
By \eqref{eq.8}
\begin{equation}\label{eq.8max}
 P\{  | e^{-\qq(\a)t} \beta_{j,\nu_t} X_j| > x|\CB^* \})=\frac{c_0e^{-\CS(\a)t} \beta_{j,\nu_t}^\a}{  x^\a}\Big (1+ \CR\Big(\frac{x}{e^{-\qq(\a)t} \beta_{j,\nu_t}} \Big) \Big ).
\end{equation}
Now  recall that, given $2N$
complex numbers $a_1,\dots,a_N$,$b_1$,$\dots,b_N$
with $|a_i|, |b_i| \leq 1$, 
\(
|\prod_{i=1}^N a_i - \prod_{i=1}^N b_i| \leq \sum_{i=1}^N |a_i-b_i|.
\)
Moreover, for every $x>0$ and $1 \leq r \leq 2$ one has
\(
 |1-x-e^{-x}| \leq C_r |x|^r.
\)
Combining these facts with \eqref{eq.8max} one gets
\[
\begin{split}
 |\Lambda_t(x)| & \leq \sum_{j=1}^{\nu_t} \left  | 1-P\{  | e^{-\qq(\a)t} \beta_{j,\nu_t} X_j| > x|\CB^* \} -e^{-\frac{c_0}{x^\a} e^{-\CS(\a)t} \beta_{j\nu_t}^\a } \right | \\
   &=  \sum_{j=1}^{\nu_t} \left |1-\frac{c_0e^{-\CS(\a)t} \beta_{j,\nu_t}^\a}{  x^\a} \Big (1+ \CR\Big(\frac{x}{e^{-\qq(\a)t} \beta_{j,\nu_t}} \Big) \Big ) -
e^{-\frac{c_0}{x^\a} e^{-\CS(\a)t} \beta_{j\nu_t}^\a } \right |    
\\
& 
\leq C_r c_0^r  e^{-r\CS(\a)t} \sum_{j=1}^{\nu_t} \frac{\beta_{j,\nu_t}^{r\a }}{x^{r\a }}  + \sum_{j=1}^{\nu_t} \frac{c_0e^{-\CS(\a)t} \beta_{j,\nu_t}^\a}{  x^\a} 
  \Big|\CR\Big(\frac{x}{e^{-\qq(\a)t} \beta_{j,\nu_t}} \Big)\Big|  \\
& \leq
  \frac{C_rc_0^r}{x^{r\a}} e^{-r\CS(\a)t} M_{\nu_t}(r\a)+\frac{c_0}{x^\a} e^{-\CS(\a)t} M_{\nu_t}(\a) \bar \CR\Big(\frac{x}{e^{-\qq(\a)t} \beta_{(\nu_t)}} \Big)
\\
&=\frac{C_rc_0^r}{x^{r\a}} e^{-r\a (\qq(\a)-\qq(r\a))t}  e^{-\CS(r\a)t} M_{\nu_t}(r\a)+\frac{c_0}{x^\a} e^{-\CS(\a)t} M_{\nu_t}(\a) \bar \CR\Big(\frac{x}{e^{-\qq(\a)t} \beta_{(\nu_t)}} \Big)
\\
\end{split}
\]
Now choose $r=\delta/\a$ and notice that $r>1$. Moreover, by the convexity of $\CS(s)$, it is easy to see that $\mu(s)<\mu(\a)$ if $\a<s<\delta$. Hence, without loss of generality, we can suppose that $\a<\d<2$.
Then, arguing as in the proof of \eqref{media1} of Proposition \ref{Lemma7new}, it is immediate to see that it also holds
\[
 \E[e^{-\CS(r\a)t} M_{\nu_t}(r\a)]=1. 
\]
Moreover, by assumption, $\qq(\a)-\qq(\delta)=\qq(\a)-\qq(r\a)>0$, hence
\begin{equation*}\label{convzeroR1a}
 \E[e^{-r\a (\qq(\a)-\qq(r\a))t}  e^{-\CS(r\a)t} M_{\nu_t}(r\a)] \to 0
\end{equation*}
when $t \to +\infty$. Combining \eqref{convL1} and \eqref{betatozero} by the generalized dominated convergence theorem one gets also that
\begin{equation*}\label{convzeroR1b}
  \E\left [e^{-\CS(\a)t} M_{\nu_t}(\a) \bar \CR\Big(\frac{x}{e^{-\qq(\a)t} \beta_{(\nu_t})} \Big)\right] \to 0.
\end{equation*}
Hence
\(
\E[\Lambda_t(x)] \to 0 
\)
as $t \to +\infty$. 
Using once again \eqref{convL1} one gets
\[
 \E\left[e^{-\frac{c_0}{x^\a} e^{-\CS(\a)t} M_{\nu_t}(\a)} \right ]\to \E\left[e^{-\frac{c_0}{x^\a} Z_{\infty}(\a)}\right ].
\]
Plugging these last convergences in \eqref{stimabasemax}
one concludes the proof for $x>0$. Since for $x<0$ there is nothing to prove, let us assume that $x=0$.
By dominated convergence theorem it is easy to see
that
\[
 \lim_{x \downarrow 0}  \E[ e^{-|x|^{-\a} Z_{\infty}{(\a)}} ] =P\{Z_{\infty}{(\a)}=0\}.
\]
Hence, if $P\{Z_\infty(\a)=0\}>0$ there is nothing to be proved since 
$0$ is a discontinuity point for $x \mapsto \E[ e^{-|x|^{-\a} Z_{\infty}{(\a)}} ]=:\fHH_\infty(x)$.
Assuming now $P\{Z_\infty(\a)=0\}=0$, one obtains that $\fHH_\infty$ is continuous and that 
for every $\eps>0$ there is $\eta=\eta(\eps)$ such that $\fHH_\infty(\eta)\leq \eps$. So that
\[
0 \leq \limsup_{t \to +\infty} \fHH_t(0)\leq \limsup_{t \to +\infty} \fHH_t(\eta)= \fHH_\infty(\eta) \leq \eps.
\]
This proves that $ \lim_{t \to +\infty} \fHH_t(0)=0$.

\end{proof}

\begin{proof}[Proof of Theorem. \ref{main.thm}]
Recalling that $\CB$ denotes the $\sigma$--field generated by the array of random variables 
$[\b_{jn}]_{jn}$, using \eqref{lowerbound} one can write
\[
 x_t^\alpha P\{|e^{-\mu(\a)t}V_t|>x_t\}=x_t^\alpha \E\Big[\sum_{n \geq 1} \J\{\nu_t=n\} P\Big\{|\sum_{j=1}^{n} e^{-\qq(\a)t}\beta_{jn}X_j |>x_t|\CB\Big\}\Big]
\geq B_t^{(0)}-B_t^{(1)}
\]
where
\[
\begin{split}
 B_t^{(0)} &:= e^{-\CS(\a)t}\E\Big[\Delta_t  \Big[\Big(1-\bar \CR\Big( \frac{x_t(1+\eps)}{\beta(\nu_t)e^{-\qq(\a)t}}\Big)\Big) \vee 0\Big ]\sum_{j=1}^{\nu_t} \beta_{j\nu_t}^\a \Big] \frac{c_0}{(1+\eps)^\a}\\
 B_t^{(1)} &:= 
e^{-2\CS(\a)t}\frac{K_0^2}{x_t^\a(1+\eps)^{2\alpha}}\E\Big[( M_{\nu_t}(\a)^2\Big ],
\end{split}
\]
for every $\eps>0$.
Setting
\[
D_t:= \Big[\Big(1-\bar \CR\Big( \frac{x_t(1+\eps)}{\beta(\nu_t)e^{-\qq(\a)t}}\Big)\Big) \vee 0\Big ]M_{\nu_t}(\a)e^{-\CS(\a)t}
\]
one gets that for every $t>0$
\[
|D_t|\leq (1+||\CR||_{\infty})M_{\nu_t}(\a)e^{-\CS(\a)t}
\]
and by \eqref{convL1}  $M_{\nu_t}(\a)e^{-\CS(\a)t}\to Z_\infty(\a)$ in $L^1$. Furthermore $|\Delta_t|\leq 1$ and $\Delta_t \to 1$ in probability by Lemma \ref{lemma6}. Finally by \eqref{betatozero} and by \eqref{eq.8bis}, one gets
\[
\bar \CR\left( \frac{x_t(1+\eps)}{\beta(\nu_t)e^{-\qq(\a)t}}\right)\to 0
\]   
in probability. Combining these facts one obtains that
\[
D_t \to Z_\infty(\a) \qquad\text{and}\qquad \Delta_tD_t \to Z_\infty(\a)
\]
in probability for $t\to +\infty$ and, by the generalized dominated convergence theorem, that $\Delta_tD_t \to Z_\infty(\a)$ in $L^1$. Hence, in view of \eqref{media1} one obtains
\[
\begin{split}
\lim_{t\to+\infty}B_t^{(0)}&=\lim_{t\to+\infty}\frac{c_0}{1+\eps^\a}\E(\Delta_t D_t) \\
&=\frac{c_0}{1+\eps^\a}\E\big(Z_\infty(\a)\big)\\
&=\frac{c_0}{1+\eps^\a}.
\end{split}
\]

As far as the term $B_t^{(1)}$ is concerned, using Lemma \ref{Lemma6new}, one can write
\[
\limsup_{t \to +\infty} B_t^{(1)} \leq C \limsup_{t \to +\infty} \frac{\tilde h(t)}{x_t^{\alpha}}
\]
for a suitable constant $C$ and $\tilde h(t)$ being defined as in the same lemma.
Then, in view of the assumptions on $x_t$ according to the expression of $\tilde h(t)$, it follows that $\limsup_{t \to +\infty} B_t^{(1)} =0$.
Hence, one gets
\[
\liminf_{t \to +\infty} x_t^\a P\{|V_t| >x_t\} \geq \liminf_{t \to +\infty} B_t^{(0)} - 
\limsup_{t \to +\infty} B_t^{(1)}= \frac{c_0}{(1+\eps)^{\a}}
\]
and then 
\begin{equation}\label{eq.23}
\liminf_{t \to +\infty} x_t^\a P\{|V_t| >x_t\} \geq c_0.
\end{equation}

On the other hand, applying \eqref{upperbound}, one gets
\begin{equation}\label{upperbound2}
\begin{split}
x_t^\alpha P\{|e^{-\nu(\a)t}V_t|>x_t\} & \leq \frac{c_0 }{(1-\eps)^\alpha} \E\left [\left (1+\bar \CR \left(\frac{x_t(1-\eps)}{\beta(\nu_t)e^{-\qq(\a)t}}\right)\right) e^{-\CS(\a)t}M_{\nu_t}(\a)  \right] \\
&
+\frac{2K_0}{\eps^2(2-\alpha) x_t^{(2-\alpha)(1-\gamma)}}  \E \left[ e^{-\CS(\a)t}M_{\nu_t}(\a)) \right]
\\ 
&  
+   \left [ \frac{ K_0 ^2}{x_t^{\alpha(2\gamma-1)}} +
\frac{K_1}{\eps^2 x_t^{2-\alpha+2(\alpha-1)\gamma}} \right]  \E\left [\big(e^{-\CS(\a)t}M_{\nu_t}(\a)\big)^2\right ]
\\
&=:U_t^{(0)}+U^{(1)}_t+U^{(2)}_t.
\end{split}
\end{equation}
As in the previous part $U_t^{(0)} \to c_0/(1-\eps)^\a$. Moreover, since $(2-\alpha)(1-\gamma)>0$ for every $\gamma<1$  and
 $\E[ e^{-\CS(\a)t}M_{\nu_t}(\alpha) ]=1$ by \eqref{media1}, one has $U^{(1)}_t \to 0$ for $t \to +\infty$. 
Finally, in view of Lemma \ref{Lemma6new}, Remark~\ref{remark1} and the assumptions on $x_t$, 
according to the value of $\CS(\a)$ and $\CS(2\a)$, one can choose $1/2<\gamma<1$ in order that $U^{(2)}_t \to 0$ for $t \to +\infty$.
Hence, 
$\limsup_{t \to +\infty} x_t^\a P\{|e^{-\CS(\a)t}V_t| > x_t\} \leq c_0/(1-\eps)^\a$ for every $\eps>0$ which implies 
\begin{equation}\label{eq.24}
 \limsup_{t \to +\infty} x_t^\a P\{|e^{-\CS(\a)t}V_t| > x_t\} \leq c_0.
\end{equation}
In view of \eqref{eq.23} and \eqref{eq.24}  we obtain
\begin{equation}\label{limite}
\lim_{t \to +\infty} x_t^\a P\{|e^{-\CS(\a)t}V_t| > x_t\} = c_0.
\end{equation}
In  order to complete the proof of \eqref{eq.7} it is sufficient to show that
\begin{equation}\label{eq.v-inf}
\lim_{t \to +\infty} x_t^\a P\{|V_{\infty}| > x_t\} = c_0.
\end{equation}
As already noted, by convexity of $\CS$  and the condition $\qq(\delta)<\qq(\a)$, 
it follows that $\qq(s)<\qq(\a)$ if $\a<s<\d$.
Hence, without loss of generality, we can assume that $\a<\d<2\a$.

Let $Z_{\infty}(\a)$ be as in Theorems \ref{thm2} and \ref{thm1}. Then
\[
V_{\infty}\stackrel{\CL}{=}Z_{\infty}(\a)^{1/\a}S_{\a}
\]
where $S_{\a}$ is a stable r.v. with index $\a$, $Z_{\infty}(\a)$ and $S_{\a}$
being independent. If $F_{\infty}$ and $G_\a$ denote the distribution functions of $V_{\infty}$ and $S_{\a}$, respectively, then
\[
F_{\infty}(x)=\E\Big[G_\a(Z_{\infty}(\a)^{-1/\a}x)\J\{Z_{\infty}(\a)\neq 0\}.\Big]
\]
 Hence
\begin{equation}\label{code}
P\{|V_{\infty}| > x\} = F_\infty(-x)+1-F_\infty(x)
=:\frac{c_0}{x^\a} \E(Z_{\infty}(\a))+ \zeta(x)=\frac{c_0}{x^\a}+ \zeta(x)
\end{equation}
since $\E(Z_{\infty}(\a))=1$ by \eqref{media1}.
From the properties of the tails of stable distributions one can write that 
\[
\Big|G_\a(-x)+1- G_\a(x) -\frac{c_0}{x^\a}\Big|\leq \frac{K}{x^\d}
\]
for $x>0$, since $\a<\d<2\a$. See, e.g., \cite{Ibragimov}.
Hence 
\[
\zeta(x)\leq C\frac{\E(Z_{\infty}(\a))^{\d/\a}}{x^\d}
\]
with $\E[Z_{\infty}(\a)^{\d/\a}]<+\infty$ by \eqref{media-delta}.

To prove \eqref{eq.7tre},  use \eqref{maxineq} to write
\[
\tilde B_t^{(0)}- \tilde B_t^{(1)} \leq  x_t^\alpha P\{|e^{-\mu(\a)t}\frH_t|>x_t\} \leq \tilde U_t^{(0)}
\]
where
\[
\begin{split}
 \tilde B_t^{(0)} :&
= c_0 \E\Big[\Big(1-\bar \CR\Big( \frac{x_t}{\beta(\nu_t)e^{-\qq(\a)t}}\Big)\Big)e^{-\CS(\a)t} M_{\nu_t}(\a) \Big] \\
\tilde  B_t^{(1)} :&= 
\frac{K_0^2}{x_t^\a}\E\Big[\big(e^{-\CS(\a)t}M_{\nu_t}(\a)\big)^2\Big ]\\
 \tilde U_t^{(0)}:&= c_0 \E\left [\left (1+\bar \CR \left(\frac{x_t}{\beta(\nu_t)e^{-\qq(\a)t}}\right)\right)  e^{-\CS(\a)t} M_{\nu_t}(\a)\right] 
\end{split}
\]
 Arguing as before, one proves that $\tilde B_t^{(0)} \to  c_0$, $\tilde B_t^{(1)} \to 0$ and $\tilde U_t^{(0)} \to c_0$ and this completes the proof. 
\end{proof}

\end{document}